\documentclass[12pt]{amsart}
\usepackage{amsmath,amssymb,graphicx,verbatim,rotating}
\usepackage[colorlinks=true, pdfstartview=FitV, linkcolor=blue, citecolor=blue, urlcolor=blue]{hyperref}
\pdfoutput=1
\usepackage{mathpazo}

\newcommand{\tre}{\text{Re}}
\newcommand{\tim}{\text{Im}}
\newcommand{\hg}{\hat{g}}
\newcommand{\hh}{\hat{h}}
\newcommand{\intii}{\int_{-\infty}^\infty}

\theoremstyle{definition}
\newtheorem{theorem}{Theorem}
\newtheorem*{nonumtheorem}{Theorem}
\newtheorem{lemma}[theorem]{Lemma}
\newtheorem*{nonumlemma}{Lemma}
\newtheorem*{remark}{Remark}
\newtheorem*{remarks}{Remarks}

\begin{document}
\title[Repulsive behavior]{Repulsive behavior in an exceptional family}
\author{Jeffrey Stopple}
\begin{abstract}
The existence of a Landau-Siegel zero leads to the Deuring-Heilbronn phenomenon, here appearing in the $1$-level density in a  family of quadratic twists of a fixed genus character $L$-function.  We obtain explicit lower order terms describing the vertical distribution of the zeros, and realize the influence of the Landau-Siegel zero as a resonance phenomenon.
\end{abstract}
\email{stopple@math.ucsb.edu}\address{Mathematics Department, UC Santa Barbara, Santa Barbara CA 93106}
\keywords{Landau-Siegel zero, Deuring-Heilbronn phenomenon, 1-Level Density, Explicit Formula}
\subjclass[2000]{11M20, 11M26}

\maketitle

\subsection*{Introduction}
This paper would be called \emph{Statistical Deuring-Heilbronn phenomenon}, but for the fact that that title is already taken \cite{Ju}.  The Deuring-Heilbronn phenomenon is the influence of a Landau-Siegel zero of a quadratic Dirichlet $L$-function on both the vertical and horizontal distribution of the zeros of other $L$-functions.  Deuring, Heilbronn, and later Stark \cite{Deuring, Heilbronn, Stark-2, Stark1} obtained results by consideration of Epstein zeta functions.  Linnik's Theorem \cite[Theorem 5, p.172]{Montgomery} can be proven by Tur\'an's power sum method.  Pintz \cite{PintzIII, PintzIV} obtained results by elementary methods under very strong hypotheses on the class number.  Jutila, and Conrey and Iwaniec \cite{Ju, CI} used the approximate functional equation.

Modern approaches to the vertical distribution of the zeros of $L$-functions are motivated by considerations of Random Matrix Theory.  Usually, assuming the Generalized Riemann Hypothesis (GRH), the $1$-level density follows from the Explicit Formula so elegantly as to be an exercise in an undergraduate text \cite[Exercise 18.2.11]{MB}.

Here, we show the Deuring-Heilbronn phenomenon via the Explicit Formula, as the $1$-level density in a \lq family\rq\ of quadratic twists of a fixed genus character.  The question of the horizontal distribution of zeros is still very difficult;  good results for complex zeros, even in the presence of a Landau-Siegel zero, would give good lower bounds on the class number \cite{SZ}.
We have partial results in an Appendix.  When necessary, we assume Hypothesis H of Sarnak and Zaharescu (see below).  For the vertical distribution, we have two goals unrealized in the prior work cited above:
\begin{enumerate}
\item Obtain explicit lower order terms describing the vertical distribution of the zeros, in the presence of a Landau-Siegel zero.
\item  Realize the influence of the Landau-Siegel zero as a \lq resonance\rq\ phenomenon; see the remarks on page \pageref{resurgence}.
\end{enumerate}

\subsection*{Notation}
Suppose $-D<0$ is a fundamental discriminant.  Let $\psi$ be a genus character of the class group $\mathcal C(-D)$, corresponding to a factorization into fundamental discriminants (with opposite sign) $-D=d_1\cdot d_2$.  By the theorem of Kronecker, 
\[
L(s,\psi)=L(s,\chi_{d_1})L(s,\chi_{d_2}).
\]
Let $f$ be another fundamental discriminant such that $(f,-D)=1$.  Then the $L$-function of $\psi$ twisted by $\chi_f$ is just
\begin{equation}\label{Eq:twistedkronecker}
L(s,\psi\otimes\chi_f)=L(s,\chi_{fd_1})L(s,\chi_{fd_2}).
\end{equation}
Let $\mathcal{F}(X)$ denote the fundamental discriminants $f$ with $(f,D)=1$ and $X\le |f|<2X$, and let $X^*=\sharp \mathcal F(X)$.  If $L(s,\chi_{-D})$ has a Landau-Siegel zero $1-\delta$, we will call the family of $L(s,\psi\otimes\chi_f)$ as above exceptional.

We will make use of an even Schwartz test
function $g$ such that $\hg$ has compact support $\subset (-\sigma,\sigma)$.\footnote{This is stronger than what we actually need.  We require $\hg$ to have compact support as control of $\sigma$ is the fundamental problem.  So $g$ is smooth.  But $g(y)\ll 1/(1+y^2)$ is sufficient; we do not actually need $g$ to be rapidly decreasing.}  The use of the Burgess bound for character sums \cite[(12.57)]{IK} leads to the introduction of a parameter $\epsilon$.  We denote the trivial character modulo $D$ by $1_D$; without subscripts, $1$ and $\chi$ denote the trivial and nontrivial characters modulo $4$, respectively.  The Euler constant is $C$.  The important $S_{{\rm odd}}(\psi)$ is equal to (\ref{Eq:newsodd}).

\subsection*{Hypothesis H} Following the work of Sarnak and Zaharescu in \cite{SZ}, we sometimes invoke the following hypothesis on the zeros of the Dedekind zeta function $\zeta(s)L(s,\chi_{-D})$ and the zeros of $L(s,\psi\otimes\chi_f)$:  Except for a Landau-Siegel zero of the Dedekind zeta function at $\beta=1-\delta$ and at $\delta$, all the others are of the form $\rho=1/2+i \gamma$ with either
\begin{enumerate}
\item[(i)] $\rho$ on the critical line, i.e. $\gamma\in\mathbb R$ or,
\item[(ii)] $\rho$ is real, i.e. $i\gamma\in (-1/2,1/2)$.
\end{enumerate}
This is Hypothesis H of \cite{SZ} for $\zeta(s)$ and quadratic Dirichlet $L$-functions only, written with a notation to single out the Landau-Siegel zero.

\subsection*{Contents}

Here is an outline of what is in the subsequent sections:
\begin{enumerate}
\item[\S \ref{S:ef1ld}]  Review of the Explicit Formula and summary of later sections to develop the $1$-level density, unconditional with respect to both  $-D$ and $\sigma$.  Theorem \ref{T:Lot} gives the first term of an asymptotic expansion of the $1$-level density in powers of the average spacing of the zeros.
\item[\S \ref{S:0}] Assuming the exponent $e$ of the principal genus is small, Theorem \ref{T:algebra} gives results for $\sigma<2/e$ and $\log_D(X)<1/\sigma e-1/2$. 
Assuming the existence of a Landau-Siegel zero,  Theorem \ref{T:analysis} uses the Burgess bound on character sums to give results for $\sigma<4/3$ and $\log_D(X)<1/4$.
\item[\S \ref{S:1}] The contribution of the conductors to the Explicit Formula.
\item[\S \ref{S:2}] The contribution of the Gamma factors.
\item[\S \ref{S:even}] The contribution of the even powers of primes.
\item[\S \ref{S:odd}] The contribution of the odd powers of primes.
\item[\S \ref{S:Appendix}] The analogous Explicit Formula for the Dedekind zeta function $\zeta(s)L(s,\chi_{-D})$.
\item[\S \ref{S:Appendix2}] Appendix: Notes towards Hypothesis H.
\end{enumerate}

\section{Explicit formula and 1-level density}\label{S:ef1ld}
We write a generic zero in the critical strip of $L(s,\psi\otimes\chi_f)$ as $\rho=1/2+i\gamma$.  By (\ref{Eq:twistedkronecker}), the Explicit Formula for $L(s,\psi\otimes\chi_f)$ follows from that for quadratic Dirichlet $L$-functions.
\begin{nonumtheorem}[Explicit Formula for twisted genus
characters]\label{thm:oneldNT} Let $g$ be an even Schwartz test
function such that $\hg$ has compact support.    We have 
{\allowdisplaybreaks
\begin{multline}\label{Eq:4}
\frac{1}{X^*}\sum_{f \in \mathcal{F}(X)} \sum_{\gamma}
g\left(\gamma \frac{\log(\sqrt{D} X)}{2\pi}\right)=\\ 
\frac1{X^*} \sum_{f \in
\mathcal{F}(X)}  \frac{\log\left(Df^2/\pi^2\right)}{\log(\sqrt{D} X)}\intii g(\tau)d\tau+\\
\frac{1}{\log(\sqrt{D} X)}\intii g(\tau)\tre\Bigg[
\frac{\Gamma'}{\Gamma}\left(\frac14+\frac{i\pi
\tau}{\log(\sqrt{D} X)}\right)+
\frac{\Gamma'}{\Gamma}\left(\frac34+\frac{i\pi
\tau}{\log(\sqrt{D} X)}\right)
\Bigg]
d\tau \\
- \frac{2}{X^*} \sum_{f
\in \mathcal{F}(X)} \sum_{k=1}^\infty \sum_p \frac{(\chi_{fd_1}(p)^k+\chi_{fd_2}(p)^k)\log
(p)}{p^{k/2}\log(\sqrt{D} X)}\ \hg\left(\frac{\log (p^k)}{\log(\sqrt{D} X)}\right).
\end{multline}
}
\end{nonumtheorem}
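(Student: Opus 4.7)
The plan is to derive (\ref{Eq:4}) by applying the classical Weil explicit formula to each of the two primitive quadratic Dirichlet $L$-functions appearing in (\ref{Eq:twistedkronecker}) and then averaging over $f\in\mathcal{F}(X)$. Since $L(s,\psi\otimes\chi_f)=L(s,\chi_{fd_1})L(s,\chi_{fd_2})$, the set of nontrivial zeros (counted with multiplicity) of the left-hand side is the union of those of the two factors, so the spectral side is simply additive.

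For each $i\in\{1,2\}$, I would apply the standard explicit formula for the primitive quadratic character $\chi_{fd_i}$ (of the type in \cite[Ex.~18.2.11]{MB}) with the dilated test function $\tau\mapsto g\!\left(\tau\log(\sqrt{D}X)/(2\pi)\right)$. The conductor of $\chi_{fd_i}$ is $|fd_i|$, and because $(f,D)=1$ with $f$ and $d_i$ fundamental, $fd_i$ is again a fundamental discriminant, so $\chi_{fd_i}$ is primitive. Summing the two conductor contributions then gives $\log(|fd_1|\cdot|fd_2|/\pi^2)=\log(Df^2/\pi^2)$, which matches the first integral on the right of (\ref{Eq:4}). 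The key observation for the archimedean piece is that $d_1d_2=-D<0$ forces exactly one of $fd_1,fd_2$ to be positive and the other negative; one factor thus contributes the gamma factor $\Gamma(s/2)$ and the other $\Gamma((s+1)/2)$, and under the substitution $s=\tfrac12+2\pi i\tau/\log(\sqrt{D}X)$ these become $\Gamma(\tfrac14+i\pi\tau/\log(\sqrt{D}X))$ and $\Gamma(\tfrac34+i\pi\tau/\log(\sqrt{D}X))$, respectively, so that the evenness of $g$ allows replacement of the logarithmic derivatives by their real parts, producing line two. The two prime-power sides add directly to give the combined character $\chi_{fd_1}(p)^k+\chi_{fd_2}(p)^k$, and a direct Fourier-rescaling calculation converts the usual argument $\log p^k/(2\pi)$ of $\hg$ into $\log(p^k)/\log(\sqrt{D}X)$, yielding the last line.

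Dividing by $X^*$ and summing over $f\in\mathcal{F}(X)$ is then formal, since the averaging commutes with each term. There is no real analytic obstacle at this stage: the statement is essentially bookkeeping around the standard explicit formula, and the only points requiring care are (i) correctly tracking the parities of $fd_1$ and $fd_2$ so that the two gamma factors assemble as $\tfrac14$ and $\tfrac34$ rather than mismatching, and (ii) being consistent with the Fourier-transform convention so that the dilation of $g$ produces the claimed rescaled argument in $\hg$ on the prime side.
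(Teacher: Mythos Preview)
Your proposal is correct and is exactly the approach the paper takes: it simply invokes the factorization (\ref{Eq:twistedkronecker}) and applies the standard explicit formula for quadratic Dirichlet $L$-functions to each factor, noting (as you do) that $fd_1$ and $fd_2$ have opposite sign so both Gamma terms $\Gamma'/\Gamma(1/4+\cdots)$ and $\Gamma'/\Gamma(3/4+\cdots)$ appear. The paper gives no further details beyond that one-line justification, so your writeup is already more explicit than the original.
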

The $L$-functions $L(s,\psi\otimes\chi_f)$ have conductor $Df^2\approx DX^2$, but are not primitive.  Since they factor (\ref{Eq:twistedkronecker}) as the product of two primitive $L$-functions of conductor $\approx \sqrt{D}X$,  the natural scale for the zeros is $\log(\sqrt{D}X)/2\pi$.

The discriminants $fd_1$ and $fd_2$ have opposite sign, so we get both possibilities for the $\Gamma^\prime/\Gamma$ term.
We analyze the first two lines on the right side above in \S \ref{S:1} and \S \ref{S:2}.

As for the sum over primes, because all the characters are quadratic, the analysis splits depending on
whether or not $k$ is even. Set 
\[
S_{{\rm odd}}(\psi) = 
- \frac{2}{X^*} \sum_{f
\in \mathcal{F}(X)} \sum_{l=0}^\infty \sum_p \frac{(\chi_{fd_1}(p)+\chi_{fd_2}(p))\log
p}{p^{(2l+1)/2}\log(\sqrt{D}X)}\ \hg\left(\frac{\log p^{2l+1}}{\log(\sqrt{D}X)}\right).
\]
and
\[
 S_{{\rm even}}  = 
- \frac{2}{X^*} \sum_{f
\in \mathcal{F}(X)} \sum_{l=1}^\infty \sum_p \frac{(\chi_{fd_1}(p)^2+\chi_{fd_2}(p)^2)\log
p}{p^l\log(\sqrt{D}X)}\ \hg\left(\frac{\log p^{2l}}{\log(\sqrt{D}X)}\right).
\]
We will see that there is no dependence on $\psi$ for the even powers.
In fact, for these even powers, we observe 
\[
\chi_{fd_1}(p)^2+\chi_{fd_2}(p)^2=
\begin{cases}
\chi_{d_1}(p)^2+\chi_{d_2}(p)^2&\text{ if }(f,p)=1,\\
\qquad \qquad 0&\text{ if } p|f.
\end{cases}
\]

We rewrite the $0$ as
\[
0=\left(\chi_{d_1}(p)^2+\chi_{d_2}(p)^2\right)-\left(\chi_{d_1}(p)^2+\chi_{d_2}(p)^2\right)
\]
and break $S_{{\rm even}}$ into two terms:
\begin{equation}\label{Eq:seven1}
S_{{\rm even};1} =
-2\sum_{\ell=1}^\infty \sum_p \frac{\left(\chi_{d_1}(p)^2+\chi_{d_2}(p)^2\right)\log
p}{p^\ell \log(\sqrt{D}X)}\ \hg\left(\frac{\log p^{2\ell}}{\log(\sqrt{D}X)}\right)
\end{equation}
where we simplified $(\sum_{f
\in \mathcal{F}(X)}1)/X^*=1$, and
\begin{equation}\label{Eq:seven2}
S_{{\rm even};2}=
\frac4{X^\ast} \sum_{f\in \mathcal{F}(X)}
\sum_{\ell=1}^\infty \sum_{p|f} \frac{\log p}{p^\ell \log(\sqrt{D}X)}\
\hg\left(\frac{\log p^{2\ell}}{\log(\sqrt{D}X)}\right),
\end{equation}
where we simplified with
\[
\chi_{d_1}(p)^2+\chi_{d_2}(p)^2=2\quad\text{ if } p|f,\text{ since }(f,d_1d_2)=1.
\]
Observe that
\[
\left(\chi_{d_1}(p)^2+\chi_{d_2}(p)^2\right)\log(p)=\begin{cases}2\log(p)&\text{ if } (p,D)=1\\
\log(p)&\text{ if }  p|D.
\end{cases}
\]
We deal with the even powers in \S \ref{S:even}.  The calculations are the same as those in \cite{Miller}; but we exercise care to avoid making any hypothesis about the support of $\hg$ for as long as possible.  Constants implied by $O(\ )$ statements should be universal.

The odd powers of primes are more interesting.  If $p$ is inert, then $\chi_{d_1d_2}(p)=\chi_{-D}(p)=-1$, so
\[
\chi_{d_1}(p) = -\chi_{d_2}(p),
\]
and consequently, for these primes
\[
\chi_{fd_1}(p)+\chi_{fd_2}(p)=\chi_f(p)\left(\chi_{d_1}(p)+\chi_{d_2}(p)\right)=0.
\]
The contribution to $S_{{\rm odd}}(\psi)$ of those $p$ which are inert is $0$.  
For those $p$ which split, $\chi_{d_1d_2}(p)=\chi_{-D}(p)=1$, so
\[
\chi_{d_1}(p) = \chi_{d_2}(p)\overset{\text{def.}}=\psi(Q),\text{ where the form } Q(x,y)\text{ represents }p.
\]
and consequently, for these primes
\[
\chi_{fd_1}(p)+\chi_{fd_2}(p)=2\chi_f(p)\psi(Q).
\]
Similarly, if $p|D$ then $p$ divides exactly one of $d_1$ and $d_2$, and the character corresponding to the factor prime to $p$ again defines the value of the genus character $\psi$ so that in this case
\[
\chi_{fd_1}(p)+\chi_{fd_2}(p)=\chi_f(p)\psi(Q).
\]
Thus we can rewrite
\begin{multline}\label{Eq:newsodd}
S_{{\rm odd}}(\psi) = 
- \frac{2}{X^*}  \sum_{l=0}^\infty \sum_p \frac{\lambda(p)\psi(Q)\log
p}{p^{(2l+1)/2}\log (\sqrt{D}X)}\times\\
 \hg\left(\frac{\log p^{2l+1}}{\log (\sqrt{D}X)}\right)\sum_{f
\in \mathcal{F}(X)}\chi_f(p).
\end{multline}
where $p$ is represented by the form $Q(x,y)$, and
\[
\lambda(p)\overset{\text{def.}}=1+\chi_{-D}(p)=
\begin{cases}
2&\text{ if }p\text{ splits},\\
1&\text{ if } p|D\\
0&\text{ if }p\text{ is inert}.
\end{cases}
\]

Combining the results of \S \ref{S:1}, \S \ref{S:2}, and \S \ref{S:even} we get the following, unconditionally with respect to $-D$ and also with respect to $\sigma$ such that the support of $\hg \subset (-\sigma,\sigma)$.
\begin{nonumtheorem}[$1$-level density for twists of a genus character]  We define
\begin{gather*}
\zeta_D(s)=\frac{\zeta(s)L(s,1_D)}{\zeta(s+1)^2}=\frac{\zeta(s)^2\prod_{q|D}(1-q^{-s})}{\zeta(s+1)^2},\quad\text{and}\\
\text{Rem}(r) =\sum_p \frac{ p\log(p)\left(1-p^{2 r}\right)}{(p+1) \left(1-p^{2 r+1}\right) \left(1-p^{2r+2}\right)}.
\end{gather*}
We have
{\allowdisplaybreaks
\begin{multline}\label{Eq:5}  
\frac1{X^\ast}\sum_{f\in \mathcal{F}(X)} \sum_{\gamma}
g\left(\gamma \cdot \frac{\log (\sqrt{D}X)}{2\pi}\right)  =\\
\left(2+\frac{2\log(4/\pi e)}{\log(\sqrt{D}X)}\right) \hg(0) -g(0) +\\
\frac{1}{\log(\sqrt{D} X)}\intii g(\tau)\tre\Bigg[
\frac{\Gamma'}{\Gamma}\left(\frac14+\frac{i\pi
\tau}{\log(\sqrt{D} X)}\right)+
\frac{\Gamma'}{\Gamma}\left(\frac34+\frac{i\pi
\tau}{\log(\sqrt{D} X)}\right)
\Bigg]
d\tau+\\
\frac{2}{\log (\sqrt{D}X)}\intii g(\tau)\tre\Bigg[
\frac{\zeta_D^\prime}{\zeta_D}\left(1+\frac{4\pi i \tau}{\log (\sqrt{D}X)}\right) \Bigg]d\tau+\\
\frac{4}{\log (\sqrt{D}X)}\intii g(\tau)\tre\Bigg[
\text{Rem}\left(\frac{2\pi i \tau}{\log
(\sqrt{D}X)}\right) \Bigg]d\tau+\\
S_{{\rm odd}}(\psi)+O\left(\frac{\max \hg\cdot\log(\omega(D))}{X^{1/2}\log(\sqrt{D}X)}\right).
\end{multline} 
}
\end{nonumtheorem}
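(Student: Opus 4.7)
The strategy is to process the Explicit Formula~\eqref{Eq:4} term by term, invoking the closed-form evaluations announced in \S\ref{S:1}, \S\ref{S:2}, and \S\ref{S:even} for the conductor, Gamma-factor, and even-prime contributions respectively, while leaving the odd-prime sum in the form $S_{\mathrm{odd}}(\psi)$.

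For the conductor line on the right of~\eqref{Eq:4}, I would first write
\[
\frac{\log(Df^2/\pi^2)}{\log(\sqrt{D}X)} \;=\; 2 \;+\; \frac{2\log(|f|/(\pi X))}{\log(\sqrt{D}X)},
\]
pull out the factor $\hg(0)=\intii g(\tau)d\tau$, and average over $\mathcal{F}(X)$. The dyadic mean of $\log y$ on $[X,2X)$ is $\log(4X/e)$, and fundamental discriminants with $(f,D)=1$ have uniform density to leading order; the arithmetic discrepancy generates a $\log\omega(D)$ factor and an $X^{-1/2}$ saving. Hence the $f$-average of $\log(|f|/(\pi X))$ is $\log(4/\pi e)+O(\log\omega(D)/X^{1/2})$, producing the first display of the conclusion. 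The Gamma line is independent of $f$ and passes through the averaging verbatim.

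The prime-power sum splits into $S_{\mathrm{odd}}(\psi)+S_{\mathrm{even};1}+S_{\mathrm{even};2}$ exactly as set up in the notation section. For $S_{\mathrm{even};1}$ I would substitute $\hg(y)=\intii g(\tau)e^{-2\pi iy\tau}d\tau$, interchange with the sums over $\ell$ and $p$, and use the fact that $\chi_{d_1}(p)^2+\chi_{d_2}(p)^2$ equals $2$ for $(p,D)=1$ and $1$ for $p\mid D$, so that the inner Dirichlet series at $s=1+4\pi i\tau/\log(\sqrt{D}X)$ equals $-\zeta'/\zeta(s)-L'/L(s,1_D)$. Adding and subtracting $2\zeta'/\zeta(s+1)$ converts this into $-\zeta'_D/\zeta_D(s)$ plus a residual; regrouping the residual as an Euler product and merging with the $\ell\ge 2$ geometric tail of the original sum produces exactly the $\mathrm{Rem}(2\pi i\tau/\log(\sqrt{D}X))$ kernel displayed in the statement. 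This yields the $\zeta_D$ and $\mathrm{Rem}$ lines of the conclusion, and the even contribution from primes in $D$ is absorbed via the factor $\prod_{q\mid D}(1-q^{-s})$ in the definition of $\zeta_D$.

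For $S_{\mathrm{even};2}$ I would approximate $\#\{f\in\mathcal{F}(X):p\mid f\}$ by $X^{*}/p$ via inclusion--exclusion against the $(f,D)=1$ constraint, exchange summations, and collapse the resulting geometric series in $\ell$; after Fourier pairing against $g$ the main term evaluates to $-g(0)$, and all remaining errors collect into the stated $O(\max\hg\cdot\log\omega(D)/(X^{1/2}\log(\sqrt{D}X)))$. The main obstacle is the $S_{\mathrm{even};1}$ step: one must carefully isolate the $s=1$ pole of $\zeta(s)L(s,1_D)$ from the $\zeta(s+1)^2$ denominator of $\zeta_D$, and track the local factors at primes $p\mid D$, so that the residual double sum collapses cleanly into the closed form $\mathrm{Rem}(r)$ rather than into a bulkier remainder; all other steps reduce to routine Fourier inversion and tail estimates.
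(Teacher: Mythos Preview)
Your overall architecture is right, but the roles of $S_{\mathrm{even};1}$ and $S_{\mathrm{even};2}$ are swapped in a way that breaks the argument. The term $-g(0)$ does \emph{not} come from $S_{\mathrm{even};2}$: after replacing $\#\{f\in\mathcal F(X):p\mid f\}$ by $X^*/(p+1)$, the resulting sum is $\sum_\ell\sum_p \log(p)/((p+1)p^\ell)$ weighted by $\hg$, which is absolutely convergent and of size $O(1/\log(\sqrt{D}X))$. In the paper this is packaged as the function $A'(r)=\sum_p \log(p)/((p+1)(p^{1+2r}-1))$, and it is the decomposition $A'(r)=-\zeta'/\zeta(2+2r)+\mathrm{Rem}(r)$ of \emph{this} lower-order piece that supplies both the $-2\zeta'/\zeta(s+1)$ needed to complete $\zeta_D'/\zeta_D$ and the $\mathrm{Rem}$ integral. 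There is no $-g(0)$ here.

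The $-g(0)$ is instead produced by $S_{\mathrm{even};1}$, and your proposed treatment of that term has a genuine gap: you cannot simply substitute $\hg(y)=\int g(\tau)e^{-2\pi iy\tau}d\tau$ and interchange, because the resulting Dirichlet series $-\zeta'/\zeta(s)-L'/L(s,1_D)=\sum_n\Lambda_D(n)n^{-s}$ diverges on the line $\mathrm{Re}(s)=1$. The paper's Lemma~\ref{L:5} (following Miller) instead writes $S_{\mathrm{even};1}$ as a contour integral $I(\epsilon)$ on $\mathrm{Re}(z)=1+\epsilon$, shifts to $\mathrm{Re}(z)=1$ via a semicircular indentation around $z=1$, and picks up exactly $g(0)$ from the half-residue at the double pole of $\zeta(s)L(s,1_D)$. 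That is the missing mechanism; the pole you flag in your last paragraph is the right object, but its contribution is the constant $-g(0)$, not a piece of $\mathrm{Rem}$.
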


\begin{remarks}
\begin{enumerate}
\item\label{resurgence} Bogomolny and Keating \cite{BoKe} were the first to observe that 
$(\zeta^\prime(s)/\zeta(s))^\prime$
similarly appears in the pair correlation for the Riemann zeros.  Berry and Keating \cite{BK} wrote in that context
\begin{quote}
\emph{\lq\lq The appearance of $\zeta(s)$ indicates an astonishing resurgence property of the zeros: in the pair correlation of high Riemann zeros, the low Riemann zeros appear as resonances.\rq\rq}
\end{quote}
The appearance of $\zeta^\prime(s)/\zeta(s)$ in the $1$-level density shows a resonance phenomenon in conductor aspect.  This was first noticed by Conrey and Snaith in \cite{CS}.

Figure \ref{Fig:1} shows a graph (in red) of the real part of $\zeta_D^\prime/\zeta_D(1+2it)$ for $-D=-1411=-83\cdot 17$ and $0\le t\le 10$.     
We know \cite[Theorem 9.6(A)]{Tit} that
\[
\frac{\zeta^\prime(s)}{\zeta(s)}=\sum_{|t-\gamma|\le1}\frac{1}{s-\rho}+O(\log(t)),
\]
so up to a small error, the logarithmic derivative at $s=1+2it$ is determined by the nearby zeros $\rho$, and is positive near such a zero.  
Unlabeled but clearly visible in Figure \ref{Fig:1} is the contribution of the nearby pole when $2t=14.134725\ldots$
One can also see in Figure \ref{Fig:1} the contribution of the periodic terms arising from the $q|D$ with periods $2\pi/\log(17)\approx 2.2$ and $2\pi/\log(83)\approx1.4 $.

\item Theorem \ref{T:algebra} below shows that in the presence of a Landau-Siegel zero, we may replace $\zeta_D(s)$ by
\[
\zeta_{LS}(s)=\frac{\zeta(s)L(2s,1_D)}{\zeta(s+1)^2L(s,\chi_{-D})}
\]
and see the \lq resonance\rq\  of the Landau-Siegel zero.  The corresponding formula for the logarithmic derivative shows the contribution of a zero of $L(s,\chi_{-D})$ is \emph{negative}.

 Also shown in Figure \ref{Fig:1} (in blue) is the graph of the real part of $\zeta_{LS}^\prime/\zeta_{LS}(1+2i t)$.   Of course we do not have a Landau-Siegel zero, but this discriminant is notable for having a low-lying zero (relative to the size of the discriminant), at $\rho=1/2+i\, 0.077967\ldots$\ \ 
The points marked in green on the horizontal axis correspond to the zeros of $L(1/2+2it,\chi_{-1411})$.  

\item In comparison the \lq remainder\rq\ term $\text{Rem}(it)$ is independent of $-D$ and should be small in comparison; see the remark on page \pageref{R:rem} and Figure \ref{Fig:3} for a  graph.
\end{enumerate}
\end{remarks}

\begin{figure}
\begin{center}
\includegraphics[scale=1, viewport=0 0 400 225,clip]{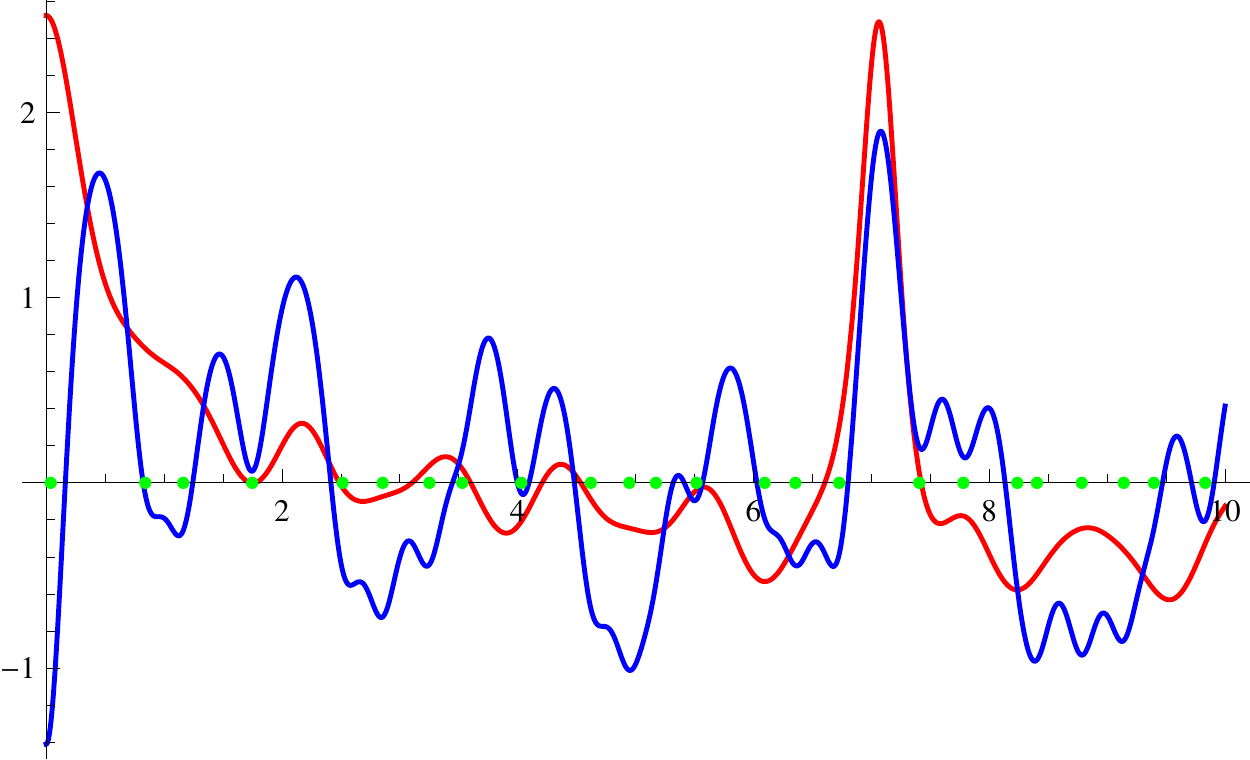}
\caption{The real parts of $\zeta_D^\prime/\zeta_D(1+2it)$ (red), and $\zeta_{LS}^\prime/\zeta_{LS}(1+2i t)$ (blue) for $-D=-1411$ and $0\le t\le 10$.}\label{Fig:1}
\end{center}
\end{figure}

In the following theorem we have the beginnings of an asymptotic expansion of the right side of (\ref{Eq:5}) in powers of the mean gap between the zeros.  Still to be accomplished in Theorems \ref{T:algebra} and \ref{T:analysis} below is a useful estimate of $S_{{\rm odd}}(\psi)$.
\begin{theorem}\label{T:Lot}
\begin{multline}\label{Eq:5p}  
\frac1{X^\ast}\sum_{f\in \mathcal{F}(X)} \sum_{\gamma}
g\left(\gamma \cdot \frac{\log (\sqrt{D}X)}{2\pi}\right)  =
2 \hg(0) -g(0) +\\
\left(2\sum_{q|D}\frac{\log(q)}{q-1}+4\frac{\zeta^\prime(2)}{\zeta(2)}-\log(4\pi^2 e^2)+2C\right)\frac{\hg(0)}{\log(\sqrt{D}X)} \\
S_{{\rm odd}}(\psi)+O\left(\frac{1}{\log(\sqrt{D}X)^2}\right).
\end{multline} 
\end{theorem}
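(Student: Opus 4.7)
The plan is to start from the preceding $1$-level density formula (\ref{Eq:5}) and Taylor expand each of its four integral terms in powers of $1/L$, where $L = \log(\sqrt{D}X)$. The crucial observation is that every argument shift appearing inside the integrands is purely imaginary of size $O(\tau/L)$: in the gamma factor it is $i\pi\tau/L$, in the $\zeta_D$ term it is $4\pi i\tau/L$, and in the Rem term it is $2\pi i\tau/L$. Consequently the first-order Taylor correction of every integrand is imaginary and is killed by \tre; the next nontrivial correction is of order $(\tau/L)^2$, which contributes $O(1/L^3)$ after being divided by the outer $1/L$ and integrated against $g$. This is exactly the mechanism that allows the error in (\ref{Eq:5p}) to be $O(1/L^2)$.

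I would then process the five pieces of (\ref{Eq:5}) in turn. The conductor line is already in the desired form, contributing $2\hg(0) - g(0) + 2\log(4/\pi e)\hg(0)/L$. For the gamma factor, Gauss's digamma theorem (equivalently, the Legendre duplication formula for $\Gamma$) yields $\psi(1/4) + \psi(3/4) = -2C - 6\log 2$, which multiplied by $\hg(0)/L$ supplies the leading contribution, with an $O(1/L^3)$ remainder. For $\zeta_D^\prime/\zeta_D$, the Laurent expansion
\[
\frac{\zeta^\prime}{\zeta}(s) = -\frac{1}{s-1} + C + O(s-1)
\]
combined with the regularity of $\zeta^\prime/\zeta$ at $s = 2$ gives $\zeta_D^\prime/\zeta_D(s) = -2/(s-1) + A + O(s-1)$ for an explicit constant $A$ assembled from $C$, $\zeta^\prime(2)/\zeta(2)$, and $\sum_{q|D}\log q/(q-1)$; at $s = 1 + 4\pi i\tau/L$ the singular part $-2L/(4\pi i\tau) = iL/(2\pi\tau)$ is purely imaginary, so \tre\ annihilates it and only the constant $A$ survives at leading order. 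The Rem term satisfies $\text{Rem}(0) = 0$ (the factor $1 - p^{2r}$ vanishes at $r = 0$) and has a real $\text{Rem}^\prime(0)$, so its contribution is $O(1/L^3)$. Finally, $S_{\rm odd}(\psi)$ is carried through unchanged, and the error $O(\max\hg \cdot \log\omega(D)/(X^{1/2}L))$ of (\ref{Eq:5}) is dominated by $O(1/L^2)$ as soon as $X$ is not too small compared with $D$.

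Collecting the surviving $1/L$-coefficients and applying the algebraic identity $2\log(4/\pi e) - 6\log 2 = -\log(4\pi^2 e^2)$ reassembles the constants into the form displayed in (\ref{Eq:5p}). The main obstacle is to confirm that the pole of $\zeta_D^\prime/\zeta_D$ at $s = 1$ contributes nothing to the real-part integral — in particular, that the singularity lies entirely in the imaginary part, so the Taylor expansion may pass under the integral sign without any distributional contribution arising from its interaction with the smooth test function $g$, and that the analogous cancellation holds for the averaged $\log(Df^2/\pi^2)$ in the conductor line where the average of $\log(|f|/X)$ over $\mathcal{F}(X)$ must be pinned down to $\log(4/e) + O(1/\log X)$. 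Once these two points are settled, the remainder is routine bookkeeping.
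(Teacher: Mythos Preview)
Your approach is essentially the paper's: start from (\ref{Eq:5}) and extract the constant and $1/L$ terms from each integral. The paper packages these estimates as Lemmas \ref{L:gammaest}, \ref{L:zetaest}, \ref{L:Aest}, using the alternative grouping $\zeta'/\zeta + L'/L(\cdot,1_D)$ and $A'$ in place of your $\zeta_D'/\zeta_D$ and $\text{Rem}$; this is just bookkeeping, and your observation that $\text{Rem}(0)=0$ is a clean way to see the Rem integral falls into the error.

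One point you gloss over: the Taylor expansion of, say, $\tre[\zeta_D'/\zeta_D(1+4\pi i\tau/L)]$ about $\tau=0$ is \emph{not} uniform in $\tau$; the $O((\tau/L)^2)$ remainder holds only on compact sets, while $\zeta'/\zeta(1+it)\ll\log|t|$ as $|t|\to\infty$. So writing ``integrated against $g$ gives $O(1/L^3)$'' is not yet justified. The paper handles this in Lemma \ref{L:zetaest} by changing variables $t=\tau/L$, splitting at $|t|=1$, and on $|t|>1$ playing $g(Lt)\ll (Lt)^{-2}$ against $\zeta'/\zeta(1+4\pi it)\ll\log|t|$ to get $O(1/L^2)$; the same split works for your direct treatment of the Gamma and Rem integrals. (For the Gamma term the paper instead applies the Montgomery--Vaughan identity \cite[Lemma 12.14]{MV} before estimating, which sidesteps the tail issue entirely.) Your stated ``main obstacle''---that the pole of $\zeta_D'/\zeta_D$ sits in the imaginary part---is correct but immediate; the genuine missing step is this tail control.
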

\begin{proof}
This follows from Lemma \ref{L:gammaest} in \S \ref{S:2},  and  Lemmas \ref{L:zetaest} and \ref{L:Aest} in \S \ref{S:even}.
\end{proof}
\section{The exceptional discriminant}\label{S:0}

We saw above that the contribution to $S_{{\rm odd}}(\psi)$ of those primes which are inert is $0$.
Suppose now that $-D<0$ is a fundamental discriminant which is exceptional, that is, there is a Landau-Siegel zero $\beta=1-\delta$ of $L(s,\chi_{-D})$.  Then the inert primes will dominate and the contribution of the split primes to $S_{{\rm odd}}(\psi)$ will be  small as long as 
\[
\delta\cdot  (\sqrt{D}X)^{\sigma /2} \ll 1
\]
 is small, where the support of $\hg \subset (-\sigma,\sigma)$.  (See \S \ref{S:Appendix}).  
What we see then in (\ref{Eq:5}) and (\ref{Eq:5p}) is the \lq main term\rq\ \ $2\hg(0)-g(0)$, and some explicit $O(1/\log(\sqrt{D}X))$ corrections. 
The question is, to paraphrase Iwaniec \cite{I}
\begin{quote}\emph{
\lq\lq\  ....  for what reason
can the zeros $\rho=1/2+i\gamma$ of $L(s,\psi\otimes\chi_f)$ be so regularly distributed to generate the
functional $g\to 2\hg(0)-g(0)$?\rq\rq}
\end{quote}

Our goal is to choose interesting values of $X$ and $\sigma$ in terms of $D$ and $\delta$, in such a way as to get good estimates on $S_{{\rm odd}}(\psi)$.  The size of $\sigma$, and in particular whether $\sigma>1$ is feasible, is of interest because of this classical fact about the functional $g\to 2\hg(0)-g(0)$:
\begin{nonumlemma}
If ${\rm supp}(\hg) \subset (-\sigma, \sigma)$, then
\[
2\hg(0)-g(0) = 2\intii g(y) \left(1-\frac{\sin(2\pi \sigma y)}{2\pi y}\right)\, dy.
\]
\end{nonumlemma}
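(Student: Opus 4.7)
The plan is to prove this via a direct Fourier-analytic computation, relying only on Fourier inversion and a single swap of integrals. The identity amounts to two observations: $\hat{g}(0)$ is just $\int g$, while $g(0)$ can be expressed as an integral of $g$ against the Fourier transform of an indicator function, and the latter is an explicit sinc kernel. Combining these produces exactly the stated identity.

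First I would note that, with the convention $\hat{g}(\xi)=\int_{-\infty}^\infty g(y)e^{-2\pi i \xi y}\,dy$ (the convention consistent with the Explicit Formula already in use above), we have $\hat{g}(0)=\int_{-\infty}^\infty g(y)\,dy$. Next, Fourier inversion gives
\[
g(0) = \int_{-\infty}^\infty \hat{g}(\xi)\,d\xi,
\]
and because $\hat g$ is supported in $(-\sigma,\sigma)$, this equals $\int_{-\sigma}^\sigma \hat{g}(\xi)\,d\xi$.

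Now I insert the indicator $\mathbf 1_{[-\sigma,\sigma]}(\xi)$ into the integral on $\mathbb R$ and swap the order of integration (justified because $g$ is Schwartz, so everything is absolutely integrable):
\[
g(0) = \int_{-\infty}^\infty \hat{g}(\xi)\,\mathbf 1_{[-\sigma,\sigma]}(\xi)\,d\xi
= \int_{-\infty}^\infty g(y)\left(\int_{-\sigma}^\sigma e^{-2\pi i \xi y}\,d\xi\right) dy.
\]
The inner integral evaluates to $\sin(2\pi\sigma y)/(\pi y)$, a routine computation. Therefore
\[
g(0) = \int_{-\infty}^\infty g(y)\,\frac{\sin(2\pi\sigma y)}{\pi y}\,dy.
\]

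Subtracting this from $2\hat g(0) = 2\int g(y)\,dy$ and factoring out the $2$ yields
\[
2\hat g(0) - g(0) = 2\int_{-\infty}^\infty g(y)\left(1 - \frac{\sin(2\pi\sigma y)}{2\pi y}\right) dy,
\]
which is the desired identity. There is no real obstacle here; the only thing to watch is consistency of the Fourier convention (the same one used in (\ref{Eq:4}) and (\ref{Eq:5})) and the absolute integrability needed to apply Fubini, both of which are immediate from $g$ being Schwartz.
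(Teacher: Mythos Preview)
Your proof is correct and takes essentially the same approach as the paper: both express $g(0)=\int_{-\sigma}^{\sigma}\hg(\xi)\,d\xi$, insert the indicator $\chi_{[-\sigma,\sigma]}$, and pass to $\int g(y)\frac{\sin(2\pi\sigma y)}{\pi y}\,dy$. The only cosmetic difference is that the paper invokes the Plancherel formula for the pairing $\langle \chi_{[-\sigma,\sigma]},\hg\rangle=\langle \hat{\chi}_{[-\sigma,\sigma]},g\rangle$, whereas you expand $\hg$ and swap integrals via Fubini; these are the same step.
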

\begin{proof}
\begin{align*}
g(0) =& \intii \hg(y)\, dy = \int_{-\sigma}^\sigma \hg(y)\, dy = \intii \chi_{[-\sigma,\sigma]}(y)\hg(y) \, dy\\
\intertext{By the Plancherel Formula this is}
=& \intii \hat{\chi}_{[-\sigma,\sigma]}(y) g(y)\, dy = \intii \frac{\sin(2\pi \sigma y)}{\pi y} g(y)\, dy.
\end{align*}
\end{proof}
\noindent
The symplectic random matrix model is conjectured to model the distribution of the zeros of quadratic Dirichlet $L$-functions.  This would give the $1$-level density  as
\[
\intii g(y) \left(1-\frac{\sin(2\pi  y)}{2\pi y}\right)\, dy
\]
for \emph{all} $\sigma$.  Under the assumption of GRH, this is a theorem (up to $o(1)$ error) as long as $\sigma<2$; see \cite[Corollary 2]{OS2}.  Thus having $S_{{\rm odd}}(\psi)$ small for $\sigma>1$ tends to repel the low lying $\gamma$ away from $0$, and closer to a periodic spacing than in the symplectic random matrix model
\footnote{Be careful not to assume that $1-\sin(2\pi \sigma y)/2\pi y$ models the histogram of the zeros.  Any given bin in such a histogram may be well approximated by a Schwartz test function $g$ with very small support.  By the Uncertainty Principal, the support of the corresponding $\hg$ will be very large, exactly the opposite of our hypothesis.}.


\subsection*{Algebra}
By genus theory, we have an exact sequence for the class group $\mathcal C(-D)$:
\[
 \mathcal P(-D)\overset{\text{def.}}=\mathcal C(-D)^2 \hookrightarrow \mathcal C(-D) \twoheadrightarrow \mathcal C(-D)/ \mathcal C(-D)^2 \simeq (\mathbb Z/2)^{\omega(D)-1}.
\]
Let $e$ denote the exponent of the principal genus $\mathcal P(-D)$.
\begin{theorem}\label{T:algebra} Assume the principal genus has odd order.  (In this case the exact sequence above splits.)
Rather than use a single genus character $\psi$, we instead average  (\ref{Eq:5}) or (\ref{Eq:5p}) over all such.  For any exponent $e$, and any $\sigma<2/e$, we take $X$ no larger than 
\[
X< \frac{D^{1/(\sigma e)-1/2}}{4}.
\] 
Then  (\ref{Eq:5}) and (\ref{Eq:5p}), averaged over $\psi$, hold  with $\sum_\psi S_{{\rm odd}}(\psi)=0$.  Furthermore, the term $\zeta^\prime_D/\zeta_D$ on the right side of (\ref{Eq:5}) may be replaced by $\zeta_{LS}^\prime/\zeta_{LS}$, where
\[
\zeta_{LS}(s)=\frac{\zeta(s)L(2s,1_D)}{\zeta(s+1)^2L(s,\chi_{-D})}.
\]
\end{theorem}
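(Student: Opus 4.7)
The plan is to verify the two assertions of the theorem in sequence: first that $\sum_\psi S_{\rm odd}(\psi)=0$ after averaging, and then that $\zeta_D'/\zeta_D$ may be replaced by $\zeta_{LS}'/\zeta_{LS}$ on the right side of (\ref{Eq:5}).

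For the first assertion, I would average (\ref{Eq:newsodd}) over the genus characters. These are precisely the characters of $\mathcal{C}(-D)/\mathcal{C}(-D)^2$, so by orthogonality $\sum_\psi \psi(Q)$ equals $|\{\psi\}|$ if $[Q]\in \mathcal{P}(-D)=\mathcal{C}(-D)^2$ and vanishes otherwise. Thus the averaged $S_{\rm odd}$ is supported on primes $p$ represented by forms in the principal genus. I would then apply genus theory to bound such $p$ from below: if $[Q]\in \mathcal{P}(-D)$ and the principal genus has exponent $e$, then $[Q]^e$ is principal, so $p^e$ is represented by the principal form $Q_0$. Since $e$ is odd by hypothesis, the case $y=0$ would force $p^e=x^2$, impossible for prime $p$; hence any representation has $y\neq 0$, giving $p^e\geq D/4$ (the minimum of $Q_0$ on lattice points with $y\neq 0$), and so $p\geq (D/4)^{1/e}$. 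But the support condition $\mathrm{supp}(\hg)\subseteq(-\sigma,\sigma)$ restricts the sum to $p\leq p^{2\ell+1}\leq (\sqrt{D}X)^\sigma$, and the hypothesis $X<D^{1/(\sigma e)-1/2}/4$ yields $(\sqrt{D}X)^\sigma<(D/4)^{1/e}$. These inequalities are incompatible, so no prime contributes and the averaged sum vanishes.

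For the second assertion, I would begin from the Euler product identity
\[
\frac{\zeta_D(s)}{\zeta_{LS}(s)} = \frac{L(s,1_D)\,L(s,\chi_{-D})}{L(2s,1_D)} = \prod_{\chi_{-D}(p)=1}\frac{1+p^{-s}}{1-p^{-s}},
\]
so the two functions differ only in their split-prime Euler factors. Taking logarithmic derivatives gives $\zeta_D'/\zeta_D-\zeta_{LS}'/\zeta_{LS}=-2\sum_{p\text{ split}}\sum_{k\geq 0}\log(p)/p^{(2k+1)s}$, and substituting this into the $\tau$-integral in (\ref{Eq:5}) then applying Plancherel converts the discrepancy between the two formulas into a prime sum over split primes at integer odd powers. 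My plan is to rederive the averaged explicit formula with the factor $L(s,\chi_{-D})$ bundled into the prime-power analysis of \S \ref{S:even} from the outset. The resulting discrepancy is then rerouted into a modified $S_{\rm odd}(\psi)$, which vanishes on averaging over $\psi$ by the same orthogonality-plus-size argument as in the first assertion, provided the same support condition on $X$ applies.

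The hard part will be the second assertion: while the first follows cleanly from orthogonality and the genus-theoretic lower bound, justifying the $\zeta$-function replacement requires a careful matching of the split-prime contribution at $s=1$ (coming from the Euler factor of $L(s,\chi_{-D})^{-1}$) against a correspondingly modified piece of $S_{\rm odd}$. The delicate point is to verify that the single support condition $X<D^{1/(\sigma e)-1/2}/4$ is strong enough to kill both the original and the auxiliary prime sums simultaneously, so that the replacement holds without introducing any new error beyond what is already present in (\ref{Eq:5}).
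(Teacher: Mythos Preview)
Your treatment of the first assertion is essentially the paper's own argument: orthogonality of the genus characters restricts the averaged $S_{\rm odd}$ to primes represented by forms in the principal genus, and the exponent-$e$ bound then forces $p\ge(D/4)^{1/e}>(\sqrt{D}X)^\sigma$, so the sum is empty. One expositional difference: the paper disposes of the ramified primes $q\mid D$ separately, observing that they are represented by ambiguous (order-two) classes, which under the odd-order hypothesis lie outside the principal genus and are therefore already annihilated by orthogonality. Your uniform size argument happens to cover this case as well, so nothing is lost.

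For the second assertion your plan departs from the paper and contains a gap. You correctly identify that $\zeta_D$ and $\zeta_{LS}$ have the same Euler factors at inert and ramified primes and differ only at split primes. But the discrepancy you write down---a sum over split primes at odd powers---carries \emph{no} $\psi(Q)$ factor, so averaging over $\psi$ cannot annihilate it; routing it into a ``modified $S_{\rm odd}(\psi)$'' does not restore the orthogonality mechanism you invoke. The paper's argument is far more direct and avoids this difficulty entirely. The $\zeta_D'/\zeta_D$ integral in (\ref{Eq:5}) arose from the finite prime sum $S_{{\rm even};1}$ via Lemma~\ref{L:5}, and the proof of that lemma only touches the Dirichlet coefficients $\Lambda_D(n)$ at those $n$ lying in the support of $\hat g(2\log n/\log(\sqrt{D}X))$. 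Since the local Euler factors of $\zeta(s)L(s,1_D)$ and $\zeta(s)L(2s,1_D)/L(s,\chi_{-D})$ coincide at every prime with $\chi_{-D}(p)\in\{-1,0\}$, and the paper asserts this accounts for all primes $p<(\sqrt{D}X)^\sigma$, the finite sum and the contour shift in Lemma~\ref{L:5} are \emph{literally unchanged} when $\zeta_{LS}$ is substituted for $\zeta_D$. There is no residual split-prime term to absorb and no second appeal to orthogonality; the replacement holds because the two underlying prime sums agree term by term in the relevant range.
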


\begin{proof}
The dependence on $\psi$ on the right side of (\ref{Eq:5}) is only in the term $S_{{\rm odd}}(\psi)$.  By orthogonality all primes are annihilated except those represented by a form in the principal genus, i.e. those $p$ congruent to a square modulo $D$.   

If $p$ does not divide $D$ and $p<(D/4)^{1/e}$, then $p$ is not represented by any form $Q(x,y)$ in the principal genus, because otherwise the principal form $Q^e$ would represents $p^e$.  But
\[
p^e = x^2+\frac{D}{4}y^2\quad\text{ or }\quad x^2+xy+\frac{D+1}{4}y^2
\]
with $y>0$ shows $p^e>D/4$.     The only primes which contribute to the Explicit Formula are those for which $\log(p)/\log(\sqrt{D}X)<\sigma$.  
Then with $X$ as above,
\[
p<(\sqrt{D}X)^\sigma \Rightarrow p<(D/4)^{1/e}
\]
and there are no split primes at all in the principal genus.  

The primes $q$ dividing $D$, on the other hand, are classically known to be represented by the ambiguous classes (those of order two) which are not in the principal genus by hypothesis.

We observe that the Euler products for
\[
\zeta(s)L(s,1_D)\quad\text{and}\quad\frac{\zeta(s)L(2s,1_D)}{L(s,\chi_{-D})}
\]
agree for all primes such that $\chi_{-D}(p)=-1$ or $0$, and thus for all primes $p<(\sqrt{D}X)^\sigma$.  The calculations for Lemma \ref{L:5} go through unchanged.  
\end{proof}

\begin{remarks}
\begin{enumerate}
\item The most interesting case is of course $e=1$ (which is still an open problem.)\ \   Then we can then take support of $\hg \subset (-\sigma,\sigma)$ for any $\sigma<2$.  As long as $X<D^{1/\sigma-1/2}/4$, we have the above conclusions.
\item For $e>1$, we have a strong restriction on $\sigma$, but without assuming GRH and with very small error, $O(X^{-1/2})$.
\item This result is actually unconditional as it does not explicitly refer to the Landau-Siegel zero.  Nonetheless Weinberger \cite[Theorem 4]{W}, and Boyd and Kisilevsky \cite[Theorem 4]{BK2} show that under GRH, $e\gg\log(D)/\log\log(D)$.
\item  The second part of the theorem does not require that $e$ be odd, as the Euler products for $\zeta_D(s)$ and $\zeta_{LS}(s)$ agree for all $q|D$ as well as for all $p$ with $\chi_{-D}(p)=-1$.  The first part may be generalized to the case of even $e$ as well: for the (relatively rare) small primes $q|D$, we may use the methods of Lemmas \ref{L:charsum2odd} and \ref{L:charsum2} to get a $O(X^{-\epsilon^2})$ estimate for their contribution to $S_{{\rm odd}}(\psi)$.  For the (extremely rare) large primes $q|D$, we take instead the trivial bound on $\sum_{f\in\mathcal F(X)}\chi_f(q) $.
\end{enumerate}
\end{remarks}

\subsection*{Analysis}

To get good bounds via the Burgess estimate on the character sum
\[
\sum_{f
\in \mathcal{F}(X)}\chi_f(p)
\]
appearing in  $S_{{\rm odd}}(\psi)$, we see in Lemma \ref{L:charsum2} below that $X$ must not be too small relative to the the prime $p$; we will need
\begin{equation}\label{Eq:twozero}
D^{\sigma/2}<X^{4-\sigma-16\epsilon}\quad\text{or}\quad\frac{\sigma}{8-2\sigma-32\epsilon}<\log_D(X),
\end{equation}
so
\begin{equation}\label{Eq:twoone}
\frac{\sigma}{8-2\sigma}<\log_D(X)
\end{equation}
is a clean necessary (but not sufficient!)\! condition.
On the other hand, to estimate the rest of $S_{{\rm odd}}(\psi)$,
\begin{equation}\label{Eq:23}
\sum_{l=0}^\infty \sum_p \frac{\lambda(p)\psi(Q)\log
p}{p^{(2l+1)/2}\log (\sqrt{D}X)}
 \hg\left(\frac{\log p^{2l+1}}{\log (\sqrt{D}X)}\right),
\end{equation}
we compare  in \S \ref{S:Appendix} to the analogous term in the Explicit Formula for $\zeta(s)L(s,\chi_{-D})$.  There we find that we need
\[
(\sqrt{D}X)^{\sigma /2} \cdot \delta\ll1.
\]
A theorem due to Page \cite{Page} tells us that
\[
\delta\gg \frac{1}{\sqrt{D}\log(D)^2}.
\]
In fact, the Goldfeld-Gross-Zagier lower bound on the class number, and known asymptotics \cite{Mallik, PintzII} for $\delta$ in terms of $L(1,\chi_{-D})$, tell us that
\[
\delta\gg \frac{\log(D)}{\sqrt{D}}.
\]
So
\begin{equation}\label{Eq:twotwo}
(\sqrt{D}X)^{\sigma/2}<\sqrt{D}\quad\text{or}\quad \log_D(X)<1/\sigma-1/2.
\end{equation}
is a clean necessary (but not sufficient!) condition for (\ref{Eq:23}) to be small.  (Thus necessarily $\sigma<2$).  Combining (\ref{Eq:twoone}) and (\ref{Eq:twotwo}) we have
\begin{equation}\label{Eq:twothree}
\frac{\sigma}{8-2\sigma}<\log_D(X)<1/\sigma-1/2
\end{equation}
These inequalities coalesce at $\sigma=4/3$ and $X=D^{1/4}$, which motivates what follows.
\begin{theorem}\label{T:analysis}
For any $\sigma<4/3$, let $X<D^{1/4}$ such that (\ref{Eq:twothree}) holds, and choose $\epsilon$ so that (\ref{Eq:twozero}) holds.  Suppose that $g$ and $\hg$ are non-negative.  Under Hypothesis H, we may omit the term $S_{{\rm odd}}(\psi)$ on the right side of (\ref{Eq:5}) and (\ref{Eq:5p}), at the expense of multiplying the right side by 
\begin{equation}\label{Eq:bigofirst}
1+O_\epsilon\left(\tau(D)\log(\omega(D))X^{-\epsilon^2}\right),
\end{equation}
and including an additional error of
\begin{equation}\label{Eq:bigosecond}
O\left(\max \hg \cdot(\sqrt{D}X)^{\sigma /2} \cdot \delta\cdot X^{-\epsilon^2}\right).
\end{equation}
\end{theorem}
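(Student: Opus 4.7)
The aim is to show $|S_{\rm odd}(\psi)|\ll\hg(0)\tau(D)\log(\omega(D))X^{-\epsilon^2}+\max\hg\cdot(\sqrt{D}X)^{\sigma/2}\delta\,X^{-\epsilon^2}$.  Since the right-hand side of (\ref{Eq:5}) minus $S_{\rm odd}(\psi)$ is of size $O(\hg(0))$ by its main term $2\hg(0)-g(0)$, the first summand is exactly what gets absorbed into the multiplicative factor $(1+O(\tau(D)\log(\omega(D))X^{-\epsilon^2}))$ applied to the stripped-down right-hand side, and the second summand is (\ref{Eq:bigosecond}).  The plan has two ingredients in parallel: a Burgess-type estimate on the $f$-character sum inside $S_{\rm odd}(\psi)$ producing the factor $X^{-\epsilon^2}$, and the Explicit Formula of \S\ref{S:Appendix} for $\zeta(s)L(s,\chi_{-D})$ applied under Hypothesis H to the remaining prime sum, where the Landau-Siegel zero supplies the factor $\delta$ via cancellation with the pole of $\zeta$.

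First I would exchange the order of summation in (\ref{Eq:newsodd}) so that $\sum_{f\in\mathcal F(X)}\chi_f(p)$ sits innermost.  Hypothesis (\ref{Eq:twozero}) on $X$ guarantees that every prime $p$ in the support of $\hg$ coprime to $D$ lies in the range where Lemma \ref{L:charsum2} gives $|\sum_f\chi_f(p)|\ll_\epsilon X^\ast X^{-\epsilon^2}$; for the $\omega(D)$ primes $p\mid D$ one uses the trivial bound $|\sum_f\chi_f(p)|\le X^\ast$ instead, which together with the divisor-type bookkeeping underlying Lemmas \ref{L:charsum2odd}--\ref{L:charsum2} is responsible for the $\tau(D)\log(\omega(D))$ factor in (\ref{Eq:bigofirst}).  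Using $|\psi(Q)|\le 1$ and the hypothesis $\hg\ge 0$ (so that the remaining prime summand has a definite sign and the triangle inequality is tight up to constants) one arrives at
\[
|S_{\rm odd}(\psi)|\ll X^{-\epsilon^2}\cdot\frac{2}{\log(\sqrt{D}X)}\sum_{l\ge 0}\sum_p\frac{\lambda(p)\log p}{p^{(2l+1)/2}}\hg\!\left(\frac{\log p^{2l+1}}{\log(\sqrt{D}X)}\right)+R_D,
\]
where $R_D$ is the $p\mid D$ contribution already accounted for above.

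The sum on the right is exactly the prime-power side of the Explicit Formula of \S\ref{S:Appendix} for $\zeta(s)L(s,\chi_{-D})$, and by that identity it equals the zero sum of $\zeta L$ minus the Gamma-factor, conductor, and pole/boundary contributions.  Under Hypothesis H the zero sum decomposes into (i) a critical-line part of total size $\log(\sqrt{D}X)\hg(0)+O(1)$ by standard bookkeeping, and (ii) the Landau-Siegel pair $\{1-\delta,\delta\}$, which together contribute $2g(i(1/2-\delta)\log(\sqrt{D}X)/2\pi)$; the boundary terms from the pole structure of $s(s-1)\zeta(s)L(s,\chi_{-D})$ at $s=0,1$ contribute $2g(i/2\cdot\log(\sqrt{D}X)/2\pi)$ on the opposite side.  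Since $\hg$ has compact support in $(-\sigma,\sigma)$, the function $g$ extends to an entire function of exponential type $2\pi\sigma$ via $g(iy)=\int_{-\sigma}^{\sigma}\hg(t)e^{-2\pi ty}\,dt$, so a single application of the mean value theorem yields
\[
2\bigl|g(i(\tfrac12-\delta)L)-g(\tfrac{i}{2}L)\bigr|\ll\delta L\max\hg\cdot(\sqrt{D}X)^{\sigma/2},\qquad L=\tfrac{\log(\sqrt{D}X)}{2\pi}.
\]
Dividing by the outer $\log(\sqrt{D}X)$ and folding in $X^{-\epsilon^2}$, the Landau-Siegel residual gives exactly (\ref{Eq:bigosecond}), while the critical-line main term yields the $\hg(0)X^{-\epsilon^2}$ contribution feeding into (\ref{Eq:bigofirst}).

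The main obstacle is the cancellation step in the preceding paragraph: obtaining the factor $\delta$ rather than merely $1$ from the near-coincidence of the Landau-Siegel zero at $1-\delta$ and the pole of $\zeta$ at $s=1$.  This requires Hypothesis H both to eliminate any other off-critical-line zeros of $\zeta(s)L(s,\chi_{-D})$ (whose contributions could otherwise be of size $(\sqrt{D}X)^{\sigma/2}$ each, killing the final bound) and to locate the Landau-Siegel partner at exactly $s=\delta$, so that the combined contribution really is a single Taylor-type difference of $g$ at two nearby imaginary arguments.  The non-negativity of $\hg$ hypothesized in the statement is the technical device that allows the triangle-inequality bound on $S_{\rm odd}(\psi)$ to be converted into the signed Dedekind-zeta prime sum to which the EF can be applied cleanly.
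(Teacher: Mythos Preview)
Your architecture is exactly the paper's: bound the inner character sum by Lemma~\ref{L:charsum2} to pull out $\tau(D)X^{-\epsilon^2}$ (together with (\ref{Eq:mertens}) for the factor $\log(\omega(D))$ coming from $1/X^*$), and then control the residual prime sum $\sum_{l,p}\lambda(p)\log(p)\,p^{-(2l+1)/2}\hg(\cdots)$ via the Explicit Formula~(\ref{Eq:17}) for $\zeta(s)L(s,\chi_{-D})$, where the Landau--Siegel/pole pair on the right side produces the $\delta\cdot(\sqrt{D}X)^{\sigma/2}$ term.

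Two places where the paper is cleaner than your write-up.  First, you do not need a separate treatment of the primes $p\mid D$: Lemma~\ref{L:charsum2} already handles \emph{all} primes in the support, and the factor $\tau(D)$ arises inside its proof from the inclusion--exclusion over $t\mid D$, not from ramified primes.  Second, and more importantly, you over-complicate the use of~(\ref{Eq:17}).  You try to evaluate the critical-line zero sum as ``$\log(\sqrt{D}X)\hg(0)+O(1)$'' (which is off by a factor of $\log(\sqrt{D}X)$ in any case) and subtract it.  The paper instead observes that under Hypothesis~H with $g,\hg\ge 0$, every term $g(\gamma\log(\sqrt{D}X)/2\pi)$ on the left of~(\ref{Eq:17}) is non-negative (real $\gamma$ gives $g\ge0$; imaginary $\gamma$ gives $\int\hg(u)e^{\cdots}du\ge0$), so the prime sum is simply \emph{bounded above} by the right side of~(\ref{Eq:17}).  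That right side already displays the same conductor, $\Gamma'/\Gamma$, and $\zeta'/\zeta$ integrals as the right side of~(\ref{Eq:5}), which is precisely why the bound manifests as a multiplicative $(1+O_\epsilon(\cdots))$ on the stripped-down~(\ref{Eq:5}) rather than as an independently estimated $\hg(0)$ term.  Dropping the zero sum by positivity, not estimating it, is the role of the hypothesis $g,\hg\ge0$.
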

\begin{remark}
As in Theorem \ref{T:algebra}, we could in Theorem \ref{T:analysis} replace the term $\zeta_D^\prime/\zeta_D$ by $\zeta_{LS}^\prime/\zeta_{LS}$, at the cost of introducing an additional error term to account for the (relatively rare) primes $p<(\sqrt{D}X)^\sigma$ for which $\chi_{-D}(p)=1$.
\end{remark}
\begin{proof}
We can use  the character sum estimate (\ref{Eq:PVbound}) with (\ref{Eq:mertens}) to bound $S_{{\rm odd}}(\psi)$ as
\[
 \ll_\epsilon 
\frac{\tau(D)\log(\omega(D))}{X^{\epsilon^2}}  \sum_{l=0}^\infty \sum_p \frac{\lambda(p)\log
p}{p^{(2l+1)/2}\log (\sqrt{D}X)}
 \hg\left(\frac{\log p^{2l+1}}{\log (\sqrt{D}X)}\right).
\]
The double sum is the absolute value of  (\ref{Eq:23}), which is less than the left side of (\ref{Eq:17}).  But on the right side of (\ref{Eq:17}) we see the same terms as in the right side of  (\ref{Eq:5}) which requires including the factor (\ref{Eq:bigofirst}), as well as the contribution of the pole and Landau-Siegel zero leading to the error term (\ref{Eq:bigosecond}).
\end{proof}

\section{The conductors term}\label{S:1}

\begin{nonumlemma}
Let $f$ denote a
fundamental discriminant with $|f|<X$. Then 
\begin{equation}\label{Eq:charsum1}
\sum_{|f| \le X} 1   = \frac{6}{\pi^2}X + O(X^{1/2}) 
\end{equation}
and for fixed $p$
\begin{equation}\label{Eq:charsum2}
 \sum_{\substack{|f| \le X \\ p|f}} 1 \ = \ \frac{6X}{\pi^2(p+1)} +
O(X/p)^{1/2}.
\end{equation}
\end{nonumlemma}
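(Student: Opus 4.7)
The plan is to reduce to counting squarefree integers in arithmetic progressions mod $4$.  Recall the classification: $f$ is a fundamental discriminant iff either (A) $f$ is squarefree with $f\equiv 1\pmod 4$, or (B) $f=4m$ with $m$ squarefree and $m\equiv 2,3\pmod 4$.  This holds for both signs of $f$, so the sum in (\ref{Eq:charsum1}) decomposes into four subsums according to sign of $f$ and case (A) or (B).

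First I would invoke the standard asymptotic, obtained by writing $\mu^2(n)=\sum_{d^2\mid n}\mu(d)$ and swapping sums:
\[
\#\{n\le Y:\mu^2(n)=1,\ n\equiv a\pmod 4\}=\frac{2}{\pi^2}Y+O(Y^{1/2}),\qquad a\in\{1,2,3\}.
\]
(The residue $a=2$ reduces to counting odd squarefree $k\le Y/2$ via the substitution $n=2k$; for $a=1,3$, each class receives half of the odd squarefrees by an equidistribution argument via Dirichlet characters mod $4$.)  Adding the four contributions---positive/negative $f$ in case (A) with one residue class each, and positive/negative $f$ in case (B) with $|m|\le X/4$ in two residue classes each---yields $(2+2+1+1)\frac{X}{\pi^2}+O(X^{1/2})=\frac{6X}{\pi^2}+O(X^{1/2})$, which is (\ref{Eq:charsum1}).

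For (\ref{Eq:charsum2}), I would split on $p=2$ versus $p$ odd.  If $p=2$, then case (A) is automatically excluded (its members are odd), while every $f$ in case (B) satisfies $2\mid f$; so the count equals the full case (B) total $\frac{2X}{\pi^2}+O(X^{1/2})$, which agrees with $\frac{6X}{\pi^2(p+1)}+O(X^{1/2})$.  If $p$ is odd, I would substitute $f=pf'$ in case (A) and $m=pm'$ in case (B) with $(f',p)=(m',p)=1$; the density of squarefrees coprime to $p$ in any fixed admissible class mod $4$ is the old density multiplied by $p/(p+1)$, the variables range over $|f'|\le X/p$ and $|m'|\le X/(4p)$, and the sieve error sharpens to $O((X/p)^{1/2})$.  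Summing the four subcases gives $\frac{4X}{\pi^2(p+1)}+\frac{2X}{\pi^2(p+1)}+O((X/p)^{1/2})$, matching (\ref{Eq:charsum2}).

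The main bookkeeping obstacle is the residue analysis mod $4$ after multiplying by an odd $p$: in case (B) the condition $m=pm'\equiv 2,3\pmod 4$ selects a pair of residues for $m'$ that depends on $p\bmod 4$.  A short case check shows that for either $p\equiv 1$ or $p\equiv 3\pmod 4$, exactly two of the three admissible residues $\{1,2,3\}\pmod 4$ arise for $m'$, so the count is uniform in $p\bmod 4$ and coalesces into the single formula $\frac{6X}{\pi^2(p+1)}$.
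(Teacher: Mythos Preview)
Your proposal is correct and is precisely the standard sieve argument the paper has in mind: the paper itself gives no details, merely citing Miller's Appendix~B and noting the sharpening of the error to $O((X/p)^{1/2})$.  Your decomposition into cases (A)/(B) and signs, followed by the squarefree count in residue classes mod $4$ (with the factor $p/(p+1)$ for coprimality to $p$ and the rescaled range $X/p$ supplying the improved error), is exactly how that reference proceeds, so there is nothing to add.
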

\begin{proof}
This can be shown following the very straightforward approach given in \cite[Appendix B]{Miller}, modified only with the improvement $1/p^{1/2}$ in the $O(\ )$.
\end{proof}

In (\ref{Eq:charsum1}), replace $X$ with $2X$ and subtract to deduce that
\[
\sum_{X\le |f| <2X}1 = \frac{6}{\pi^2}X+O(X^{1/2}).
\]
Via an inclusion-exclusion argument, we then deduce from (\ref{Eq:charsum1}) that
\[
X^\ast=\sharp \mathcal F(X) = \frac{6}{\pi^2}\prod_{q|D}\left(1-\frac{1}{q+1}\right)X+O(X^{1/2}).
\]
We will later have need of an estimate for $1/X^*$ in terms of $D$, i.e. the size of $\prod_{q|D}(1-1/(q+1))^{-1}$.  With
$
\omega(D)=\sharp\left\{q|D\right\}
$,
we can estimate
\[
\prod_{q|D}(1-1/(q+1))^{-1}<\prod_{i=1}^{\omega(D)}(1-1/p_i)^{-1}\ll \log(\omega(D))
\]
by Mertens' Formula.  So
\begin{equation}\label{Eq:mertens}
\frac{1}{X^\ast}\ll\frac{\log(\omega(D))}{X}.
\end{equation}

We are now ready to analyze the first term on the right side of (\ref{Eq:4})
\[
\frac1{X^*} \sum_{f \in
\mathcal{F}(X)}  \frac{\log\left(Df^2/\pi^2\right)}{\log(\sqrt{D} X)}\intii g(\tau)d\tau.
\]
Of course, the integral
$\intii g(\tau)d\tau$ is just $\hg(0)$.
Certainly we can re-write
\[
\log\left(\frac{Df^2}{\pi^2}\right)=\log\left(\frac{D}{\pi^2}\right)+2\log|f|;
\]
the first term, when summed over $f$, cancels the $1/X^*$, contributing a term
$\log(D/\pi^2)\hg(0)/\log(\sqrt{D} X)$.
\begin{lemma}
\[
\frac1{X^*\log(\sqrt{D} X)} \sum_{f \in \mathcal F(X)}  2\log|f| =\frac{\log(16X^2/e^2)}{\log(\sqrt{D}X)}+O\left(\frac{\log(\omega(D))}{X^{1/2}\log(\sqrt{D}X)}\right).
\]
\end{lemma}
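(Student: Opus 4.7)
The plan is partial (Abel) summation against the counting function
\[
A(t) = \sum_{\substack{|f|\le t\\(f,D)=1}}1.
\]
By inclusion--exclusion applied to the preceding lemma,
\[
A(t) = C t + O(t^{1/2}),\qquad C = \frac{6}{\pi^2}\prod_{q\mid D}\left(1-\frac{1}{q+1}\right),
\]
so in particular $X^\ast = A(2X)-A(X) = CX + O(X^{1/2})$.

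I would first apply Abel summation to obtain
\[
\sum_{f\in\mathcal F(X)}2\log|f|
 = 2\log(2X)\,A(2X) - 2\log(X)\,A(X) - \int_X^{2X}\frac{2A(t)}{t}\,dt,
\]
and then substitute $A(t) = Ct + O(t^{1/2})$. The main contribution collapses cleanly:
\[
C\bigl(4X\log(2X) - 2X\log X - 2X\bigr) \;=\; CX\log(16X^2/e^2),
\]
while the $O(t^{1/2})$ pieces contribute only $O(X^{1/2}\log X)$ after the integration by parts is finished.

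The last step is to divide by $X^\ast\log(\sqrt{D}\,X)$, writing
\[
\frac{CX}{X^\ast} \;=\; 1 + O\!\left(\frac{\log(\omega(D))}{X^{1/2}}\right),
\]
which follows by combining $X^\ast = CX + O(X^{1/2})$ with the Mertens bound (\ref{Eq:mertens}) for $1/X^\ast$. This produces the main term $\log(16X^2/e^2)/\log(\sqrt{D}X)$, and the $O(X^{1/2}\log X)$ contribution is absorbed into the stated error $O\bigl(\log(\omega(D))/(X^{1/2}\log(\sqrt{D}X))\bigr)$ once $\log X \ll \log(\sqrt{D}X)$ is used.

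The only genuine bookkeeping obstacle is the $\omega(D)$-dependence: the density $C$ decays as $\omega(D)\to\infty$, so $1/X^\ast$ inflates the naive error, and it is precisely the Mertens inequality (\ref{Eq:mertens}) that reabsorbs this growth. Aside from that, the computation is a routine partial-summation exercise.
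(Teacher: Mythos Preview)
Your approach is the same partial-summation argument as the paper's, but your choice of counting function costs you a logarithm in the error, and your final absorption step does not recover it.

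With your $A(t)$ counting fundamental discriminants from $0$ up to $t$, the boundary contribution $2\log(2X)A(2X)-2\log(X)A(X)$ carries an error $O(X^{1/2}\log X)$, as you note. After dividing by $X^*\log(\sqrt{D}X)$ and invoking (\ref{Eq:mertens}), this becomes
\[
O\!\left(\frac{\log(\omega(D))\,\log X}{X^{1/2}\log(\sqrt{D}X)}\right).
\]
The relation $\log X \ll \log(\sqrt{D}X)$ only tells you the ratio $\log X/\log(\sqrt{D}X)$ is bounded; it does \emph{not} manufacture an extra factor of $1/\log(\sqrt{D}X)$. So you end up with $O(\log(\omega(D))/X^{1/2})$, weaker than the lemma's stated error by a factor of $\log(\sqrt{D}X)$. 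The same loss recurs when you pass from the main term $CX\log(16X^2/e^2)$ to the answer via $CX/X^*=1+O(\log(\omega(D))/X^{1/2})$: multiplying that error by $\log(16X^2/e^2)/\log(\sqrt{D}X)$, which is only $O(1)$, again gives $O(\log(\omega(D))/X^{1/2})$.

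The paper sidesteps this by taking $A(x)$ to count only those $f$ with $X\le|f|<x$. Then $A(2X)=X^*$ \emph{exactly}, so the boundary term $A(2X)h(2X)=2X^*\log(2X)$ has no error; the only $O$-term comes from $\int_X^{2X}A(t)h'(t)\,dt$, which is $O(X^{1/2})$ with no logarithm. After replacing $CX$ by $X^*+O(X^{1/2})$ inside the integral one obtains $2X^*\log(2X)-2X^*+2X^*\log 2 + O(X^{1/2})$, and dividing by $X^*$ gives the main term with \emph{no} further error. You can salvage your version the same way: write
\[
2\log(2X)A(2X)-2\log(X)A(X)=2X^*\log(2X)+2A(X)\log 2,
\]
using the exact identity $A(2X)-A(X)=X^*$; the $\log X$ then never appears.
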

\begin{proof} This is just partial summation as in \cite[Thm 4.2]{Apostol}.  We let
\[
A(x)=\sum_{\substack{X\le |n|<x \\ n \text{ fund., }(n,D)=1}}1,\qquad h(x) = 2\log|x|
\]
so
\begin{align*}
\sum_{f \in \mathcal F(X)}  2\log|f| 
=& \sum_{n<2X} h(n) dA(n)\\
=&A(2X)h(2X)-\int_X^{2X} A(t)h^\prime(t)\, dt\\
=&2X^*\log(2X)-\\
&\quad\int_X^{2X} \left(\frac{6}{\pi^2}\prod_{q|D}\left(1-\frac{1}{q+1}\right)(t-X)+O(t^{1/2})\right)2t^{-1}\, dt\\
=&2X^*\log(2X)-2X^\ast+2X^*\log(2)+O(X^{1/2}).
\end{align*}
Dividing by $X^*\log(\sqrt{D}X)$ gives the lemma.

\end{proof}

Combining with the easy previous term, we see that the first line on the right side of (\ref{Eq:4}) is
\begin{multline*}
\frac{\log(DX^2\cdot 16/\pi^2e^2)}{\log(\sqrt{D} X)}\hg(0)+O\left(\frac{\hg(0)\log(\omega(D))}{X^{1/2}\log(\sqrt{D}X)}\right)=\\
\left(2+\frac{\log(16/\pi^2e^2)}{\log(\sqrt{D}X)}\right) \hg(0) +O\left(\frac{\hg(0)\log(\omega(D))}{X^{1/2}\log(\sqrt{D}X)}\right).
\end{multline*}

\section{The Gamma terms}\label{S:2}

In this section we show that the second line on the right side of (\ref{Eq:5}), coming from the Gamma factors, simplifies, at the expense of rewriting in term of $\hg$ instead of $g$.
In
\begin{multline}\label{Eq:gammamess}
\frac{1}{\log(\sqrt{D}X)}\intii g(\tau)\tre\Bigg[
\frac{\Gamma'}{\Gamma}\left(\frac14+\frac{i\pi
\tau}{\log(\sqrt{D}X)}\right)+\\
\frac{\Gamma'}{\Gamma}\left(\frac34+\frac{i\pi
\tau}{\log(\sqrt{D}X)}\right)
\Bigg]
d\tau
\end{multline}
we change variables $t=\tau/\log(\sqrt{D}X)$.  We will next apply \cite[Lemma 12.14]{MV} which serves as a sort of substitute for the Plancherel Theorem in this context.
Apply the lemma twice, with $b=\pi$ and $a=1/4$ (resp. $3/4$).  Writing $\widehat{J}(t)=g(\log(\sqrt{D}X) t)$, the usual Fourier identities give $J(y)=\hg(y/\log(\sqrt{D}X))/\log(\sqrt{D}X)$.  The lemma \emph{op. cit.} says that (\ref{Eq:gammamess}) is
\begin{multline*}
\left(\frac{\Gamma'}{\Gamma}(1/4)+\frac{\Gamma'}{\Gamma}(3/4)\right)\frac{\hg(0)}{\log(\sqrt{D}X)}+\\
\frac{1}{\log(\sqrt{D}X)}\int_0^\infty\frac{\hg(0)-\hg(y/\log(\sqrt{D}X))}{\sinh(y/2)}dy,
\end{multline*}
after some fiddling with the exponentials and recalling that $\hg$ is even.  
For future reference we note that
\[
\frac{\Gamma'}{\Gamma}(1/4)+\frac{\Gamma'}{\Gamma}(3/4)=-2C-\log(64)
\]
where $C$ is the Euler constant.
\begin{lemma}\label{L:gammaest}  
\[
\frac{1}{\log(\sqrt{D}X)}\int_0^\infty\frac{\hg(0)-\hg(y/\log(\sqrt{D}X))}{\sinh(y/2)}dy\ll\frac{1}{\log(\sqrt{D}X)^2}.
\]
\begin{proof}
Since $g$ is Schwartz, $\hg(y)=\hg(0)+O(y)$.
The lemma follows immediately from the fact that
\[
\int_0^\infty\frac{y}{\sinh(y/2)}dy=\pi^2.
\]
\end{proof}
\end{lemma}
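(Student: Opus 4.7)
The plan is to estimate the integrand pointwise by expanding $\hg$ near the origin, and then show that the resulting bound integrates to a convergent quantity multiplied by the extra $1/\log(\sqrt{D}X)$ factor that we need.

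First, since $g$ is an even Schwartz function, $\hg$ is smooth (in fact even Schwartz), so by Taylor expansion at the origin
\[
\hg(u) = \hg(0) + O(u) \quad \text{as } u \to 0,
\]
with an implied constant depending only on $\|\hg'\|_\infty$. (Since $\hg$ is even, $\hg'(0)=0$, so one actually has $\hg(u)=\hg(0)+O(u^2)$; but the weaker statement is all we need.) Substituting $u=y/\log(\sqrt{D}X)$ gives
\[
\hg(0) - \hg\!\left(\frac{y}{\log(\sqrt{D}X)}\right) \ll \frac{y}{\log(\sqrt{D}X)},
\]
uniformly for $y \in [0,\infty)$; the bound is valid for all $y$ because both $\hg(0)$ and $\hg(y/\log(\sqrt{D}X))$ are bounded, while for small $y$ the Taylor estimate dominates, and for large $y$ one can simply use $\hg(0)-\hg(u)\ll 1\ll y/\log(\sqrt{D}X)$ when $y\geq \log(\sqrt{D}X)$.

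Next, insert this bound into the integral to obtain
\[
\frac{1}{\log(\sqrt{D}X)}\int_0^\infty\frac{\hg(0)-\hg(y/\log(\sqrt{D}X))}{\sinh(y/2)}\,dy \ll \frac{1}{\log(\sqrt{D}X)^2}\int_0^\infty \frac{y}{\sinh(y/2)}\,dy.
\]
The remaining integral is independent of $D$ and $X$ and it converges: near $y=0$ the integrand is bounded (since $y/\sinh(y/2) \to 2$), and for large $y$ the factor $\sinh(y/2)$ grows exponentially, dominating the linear growth of $y$. In fact, an explicit evaluation (e.g.\ via the series $1/\sinh(y/2) = 2\sum_{n\geq 0} e^{-(2n+1)y/2}$ followed by termwise integration, yielding $8\sum_{n\geq 0}(2n+1)^{-2} = \pi^2$) gives the stated value $\pi^2$, though only the convergence is needed for the asymptotic bound.

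The main obstacle is purely bookkeeping: ensuring the Taylor bound on $\hg(0)-\hg(u)$ is valid uniformly in $y$ and not only for small $y$. This is handled by patching together the local Taylor estimate with a trivial global bound, as indicated above. No properties of $g$ beyond being Schwartz (or really, $\hg$ being $C^1$ with bounded derivative) enter the argument.
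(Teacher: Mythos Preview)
Your argument is correct and is essentially the same as the paper's: both use the first-order Taylor bound $\hg(0)-\hg(u)=O(u)$ and reduce to the convergent integral $\int_0^\infty y/\sinh(y/2)\,dy=\pi^2$. Your extra care about uniformity in $y$ is harmless but unnecessary, since the mean value theorem with $\|\hg'\|_\infty<\infty$ already gives $|\hg(0)-\hg(u)|\le \|\hg'\|_\infty\,|u|$ for all $u$, not just small $u$.
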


\section{The sum over even powers of primes}\label{S:even}
Let
\[
\zeta(s)L(s,1_D)=\zeta(s)^2\prod_{q|D}(1-q^{-s})
\]
and
\[
\Lambda_D(n)=\begin{cases}2\log(p)&\text{ if }n=p^k, (p,D)=1\\
\log(q)&\text{ if }  n=q^k, q|D\\
0&\text{ otherwise,}
\end{cases}
\]
so
\begin{equation}\label{Eq:LambdaD}
\frac{\zeta^\prime(s)}{\zeta(s)}+\frac{L^\prime(s,1_D)}{L(s,1_D)}=-\sum_n\Lambda_D(n)n^{-s}.
\end{equation}

\begin{lemma}\label{L:5}
Suppose $\hg$ has finite support. Then 
\begin{multline}\label{Eq:6}
S_{{\rm even};1}  = -g(0) + \frac{2}{\log(\sqrt{D}X)}\times \\
\intii g(\tau)\tre\left[
\frac{\zeta^\prime}{\zeta}\left(1+\frac{4\pi i \tau}{\log(\sqrt{D}X)}\right)+
\frac{L^\prime}{L}\left(1+\frac{4\pi i \tau}{\log(\sqrt{D}X)},1_D\right) \right]d\tau.
\end{multline}
\end{lemma}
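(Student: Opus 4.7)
The plan is to express $S_{{\rm even};1}$ as a Mellin--Perron integral and shift the contour past the simple poles of $\zeta(s)$ and $L(s,1_D)$ at $s=1$; the resulting residue accounts exactly for the $-g(0)$.

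First I reindex: since $(\chi_{d_1}(p)^2+\chi_{d_2}(p)^2)\log p = \Lambda_D(p)$ and $\Lambda_D(p^k)=\Lambda_D(p)$ for every $k\ge 1$, the substitution $m=p^\ell$ converts the double sum defining $S_{{\rm even};1}$ into
\[
S_{{\rm even};1} = -\frac{2}{\mathcal L}\sum_{m}\frac{\Lambda_D(m)}{m}\,\hg\!\left(\frac{2\log m}{\mathcal L}\right), \qquad \mathcal L := \log(\sqrt{D}X),
\]
where $m$ ranges over prime powers. With $G(u)=\hg(2\log u/\mathcal L)/u$, the substitution $y=2\log u/\mathcal L$ produces the Mellin transform
\[
\tilde G(s) = \tfrac{\mathcal L}{2}\intii \hg(y)e^{\mathcal Ly(s-1)/2}\,dy,
\]
which on the line $s=1+4\pi i\tau/\mathcal L$ reduces by Fourier inversion to $(\mathcal L/2)g(\tau)$, and in particular $\tilde G(1)=(\mathcal L/2)g(0)$. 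Because $\hg$ is Schwartz, $\tilde G(s)$ decays rapidly on vertical lines, and since the Dirichlet series in (\ref{Eq:LambdaD}) converges absolutely for $\tre(s)>1$, Perron's formula yields
\[
\sum_m \Lambda_D(m)G(m) = \frac{1}{2\pi i}\int_{\tre(s)=c}\!\bigl[-\tfrac{\zeta'}{\zeta}(s) - \tfrac{L'}{L}(s,1_D)\bigr]\tilde G(s)\,ds
\]
for any $c>1$.

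The decisive step is the contour shift. Both $\zeta(s)$ and $L(s,1_D)$ have simple poles at $s=1$, so $-\zeta'/\zeta-L'/L = 2/(s-1)+O(1)$ there and the integrand has residue $2\tilde G(1)=\mathcal L g(0)$. Deforming from $\tre(s)=c$ to the right-indented contour crosses no pole, and the principal-value contour on $\tre(s)=1$ lies halfway between the right- and left-indented contours; hence it picks up $+\tfrac12$ of $2\pi i\cdot\text{res}$. On the critical line, Schwarz reflection ($F(\bar s)=\overline{F(s)}$ for the real Euler products) and evenness of $g$ together identify the PV integral with the real-part integral. Parametrizing $s=1+4\pi i\tau/\mathcal L$ gives
\[
\sum_m \Lambda_D(m)G(m) = \intii g(\tau)\,\tre\!\Bigl[-\tfrac{\zeta'}{\zeta}(s)-\tfrac{L'}{L}(s,1_D)\Bigr]d\tau + \tfrac{\mathcal L g(0)}{2},
\]
and multiplying through by $-2/\mathcal L$ produces the lemma, with the residue contribution $(-2/\mathcal L)(\mathcal L g(0)/2)=-g(0)$ and the overall sign flipping the bracketed quantity into $\tre[\zeta'/\zeta + L'/L]$.

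The main obstacle is careful sign bookkeeping at the pole: one must verify that the joint residue of $-\zeta'/\zeta-L'/L$ at $s=1$ equals $+2$ (each logarithmic derivative contributing $+1$), that deforming past the pole via the right indent picks up $+\tfrac12$ of $2\pi i\cdot\text{res}$ rather than $-\tfrac12$, and that the outer $-2/\mathcal L$ combines with the residue to yield $-g(0)$ rather than $+g(0)$ on the right-hand side as stated.
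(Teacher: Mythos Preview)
Your proof is correct and follows essentially the same route as the paper's. Both arguments rewrite $S_{\mathrm{even};1}$ as $-\tfrac{2}{\mathcal L}\sum_n \Lambda_D(n)n^{-1}\hg(2\log n/\mathcal L)$, express this sum as a contour integral of $\bigl[-\zeta'/\zeta-L'/L\bigr]$ against a transform of $\hg$ on a line $\mathrm{Re}(s)=c>1$, push the contour to $\mathrm{Re}(s)=1$ while going around $s=1$ on a right semicircle (your half-residue), and then use the even/odd symmetry of $g$ and of $\mathrm{Re}/\mathrm{Im}$ on the line to reduce to the real-part integral. Your Mellin kernel $\tilde G(s)$ is exactly $(2/\mathcal L)^{-1}$ times the paper's $g\bigl((2z-2)\log A/(4\pi i)\bigr)$ with $A=D^{1/4}X^{1/2}$, so the two integrals are literally the same object; the paper just recovers the identity $I(\epsilon)=-S_{\mathrm{even};1}$ by an explicit Fourier computation rather than invoking Perron/Mellin inversion by name. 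One cosmetic point: the rapid vertical decay of $\tilde G$ comes from the smoothness of $\hg$ (compact support plus $g$ Schwartz), not from ``$\hg$ Schwartz'' per se, and it is the compact support of $\hg$ that makes $\tilde G$ entire so the shift is legitimate.
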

\begin{remark} I'm not sure of the origin of this key lemma, but I expect it must be implicit in \cite{OS}.  The proof given here follows the elegant treatment of Miller in \cite{Miller} in all essentials; it is included here merely for completeness.  
\end{remark}
\begin{proof}
We have 
\[
S_{{\rm even};1}  =  \frac{-2}{\log(\sqrt{D}X)}\sum_{n=1}^\infty \frac{\Lambda_D(n)}{n} \ \hg\left(2\frac{\log
n}{\log (\sqrt{D}X)}\right). 
\]
For any $\epsilon > 0$ define\footnote{For this Lemma we use $\delta$ and $\epsilon$ as generic small parameters, not in the global sense they have in the rest of the paper.}
\[
 I(\epsilon)  =
 \frac{1}{2\pi i} \int_{\tre(z)=1+\epsilon} g\left(\frac{(2z-2)\log A}{4\pi i}\right)
\sum_{n=1}^\infty \frac{\Lambda_D(n)}{n^z}\ dz; 
\]
we will later
take $A = D^{1/4}X^{1/2}$.

Miller re-writes $I(\epsilon)$ by shifting contours while avoiding poles. For $\delta > 0$ consider the contour
made up of three pieces: $(1-i\infty,1-i\delta]$, $C_\delta$, and
$[1+i\delta,1+i\infty)$, where 
\[
C_\delta = \{z = 1+ \delta
e^{i\theta}, \theta \in [-\pi/2,\pi/2]\}
\]
is the semi-circle going
counter-clockwise from $1-i\delta$ to $1+i\delta$. By Cauchy's
residue theorem, the contour in $I(\epsilon)$ can be shifted from $\tre(z) =
1+\epsilon$ to the three curves above.  Recalling (\ref{Eq:LambdaD}), $ I(\epsilon)$ is equal to
\[
\frac{1}{2\pi i}\int_{1-i\infty}^{1-i\delta} + \int_{C_\delta} +
\int_{1+i\delta}^{1+i\infty} g\left(\frac{(2z-2)\log A}{4\pi
i}\right) \left(-\frac{\zeta^\prime(z)}{\zeta(z)}-\frac{L^\prime(z,1_D)}{L(z,1_D)}\right)\ dz.
\]
The limit as $\delta \to 0$ of the integral
over $C_\delta$ is evaluated as follows.   One can write
\begin{multline*}
g\left(\frac{(2z-2)\log A}{4\pi
i}\right) \left(-\frac{\zeta^\prime(z)}{\zeta(z)}-\frac{L^\prime(z,1_D)}{L(z,1_D)}\right) = \\
g(0)\cdot \frac{2}{z-1}+\text{holomorphic}.
\end{multline*}
(Recall that $\zeta(s)^2$ has a double pole at $s=1$, while the factors from $q|D$ are holomorphic.)\ \ 
The contribution of the pole is $g(0)$, independent of what $\delta$ is, while the holomorphic piece tends to $0$ with $\delta$ by usual bound on integrands and path lengths.
Now take the limit as $\delta \to 0$ in what remains: 
\begin{multline*}
g(0)- I(\epsilon) =\\
\lim_{\delta \to 0} \frac1{2\pi}
\int_{-\infty}^{-\delta} + \int_{\delta}^\infty
g\left(\frac{y\log A}{2\pi}\right)
\left(\frac{\zeta^\prime(1+iy)}{\zeta(1+iy)}+\frac{L^\prime(1+iy,1_D)}{L(1+iy,1_D)}\right)dy.
\end{multline*}
Miller claims the limit of the integral above is well-defined.
For large $y$ this follows from the decay of $g$.  For small
$y$ it follows from the fact that 
\[
\frac{\zeta^\prime(1+iy)}{\zeta(1+iy)}+\frac{L^\prime(1+iy,1_D)}{L(1+iy,1_D)} = \frac{-2}{iy}+O(1).
\]
The contribution of the pole is an odd function of $y$, so orthogonal to $g$ which is even. 
The imaginary part of $\zeta^\prime/\zeta(1+iy)+L^\prime/L(1+iy,1_D)$ is also an odd function of $y$, only the real part survives.  Take $A=D^{1/4}X^{1/2}$,
and change variables to $\tau = y\log (A)/2\pi$ which is $y\log(\sqrt{D}X)/4\pi$. Thus 
\begin{multline*}
 I(\epsilon) = g(0) - \frac{2}{\log(\sqrt{D}X)}\times\\
 \intii
g(\tau)\tre\left[ \frac{\zeta^\prime}{\zeta}\left(1+\frac{4\pi i \tau}{\log(\sqrt{D}X)}\right)
+\frac{L^\prime}{L}\left(1+\frac{4\pi i \tau}{\log(\sqrt{D}X)},1_D\right) \right]d\tau.
\end{multline*}
Observe that his beautiful formula is independent of $\epsilon$!

On the other hand, in the original definition of $I(\epsilon)$ (before the contour was moved), write $z = 1+\epsilon+iy$.  We will use
Fourier analysis to write
$g(x+iy)$ in terms of the transform of $\hg(u)$.  
Normalize the Fourier transform so that
\begin{gather*}
\hat g(u) = \intii g(x) e^{-2\pi i x u}dx\\
g(x) =  \intii \hat g(u) e^{2\pi i xu} du \\
g(x+iy) = \intii \hat g(u) e^{2\pi i(x+iy)u} du.
\end{gather*}

He has that 
\[
I(\epsilon)= \sum_{n=1}^\infty \frac{\Lambda_D(n)}{n^{1+\epsilon}} \frac{1}{2\pi i} \intii
g\left(\frac{y\log A}{2\pi}-\frac{i\epsilon\log
A}{2\pi}\right)\exp(-iy\log n) idy 
\]
which is equal to
\begin{multline*} 
\sum_{n=1}^\infty
\frac{\Lambda_D(n)}{n^{1+\epsilon}} \frac1{2\pi} \intii \exp(-iy\log n) \cdot\\
\intii 
\hg(u)\exp(\epsilon u \log A) \exp(2\pi i \frac{y\log A}{2\pi
}u) du\, dy. 
\end{multline*}
Let $h_\epsilon(u) = \hg(u)
\exp(\epsilon u \log A)$. Note that $\widehat{\widehat{h_\epsilon}}(w) = h_\epsilon(-w)$.
Thus 
{\allowdisplaybreaks
\begin{align*} 
I(\epsilon)  =  & \sum_{n=1}^\infty
\frac{\Lambda_D(n)}{n^{1+\epsilon}}  \frac1{2\pi} \intii
\widehat{h_\epsilon}\left(-\frac{y\log A}{2\pi}\right) \exp(-iy\log n) dy\\ 
=& \sum_{n=1}^\infty \frac{\Lambda_D(n)}{n^{1+\epsilon}}
\intii \widehat{h_\epsilon}(y) \exp(-2\pi i y (-\frac{\log n}{\log A}))\
\frac{ dy}{\log A} \\ 
=& \frac1{\log A}\sum_{n=1}^\infty
\frac{\Lambda_D(n)}{n^{1+\epsilon}} \
\widehat{\widehat{h_\epsilon}}\left(-\frac{\log n}{\log A}\right)\\
=&  \frac1{\log A}\sum_{n=1}^\infty \frac{\Lambda_D(n)}{n^{1+\epsilon}} \
\hg\left(\frac{\log n}{\log A}\right) \exp(\epsilon \log n) \\
=& \frac1{\log A}\sum_{n=1}^\infty \frac{\Lambda_D(n)}{n}\
\hg\left(\frac{\log n}{\log A}\right). 
\end{align*}
}
By again taking
$A = D^{1/4}X^{1/2}$ we find 
\[
I(\epsilon)  = \frac{2}{\log(\sqrt{D}X)}\sum_{n=1}^\infty \frac{\Lambda_D(n)}{n}  \hg\left(2\frac{\log
n}{\log(\sqrt{D}X)}\right) = -S_{{\rm even};1}.
\]

\end{proof}

As with the Gamma terms in the previous section, we can estimate the integral:
\begin{lemma}\label{L:zetaest} 
\begin{multline*}
\frac{2}{\log(\sqrt{D}X)}\intii g(\tau)\tre\left[
\frac{\zeta^\prime}{\zeta}\left(1+\frac{4\pi i \tau}{\log(\sqrt{D}X)}\right)
+\frac{L^\prime}{L}\left(1+\frac{4\pi i \tau}{\log(\sqrt{D}X)},1_D\right)
\right] d\tau\\
=2\left(2C+\sum_{q|D} \frac{\log(q)}{q-1}\right)\cdot\frac{\hg(0)}{\log(\sqrt{D}X)}+O\left( \frac{1}{ \log(\sqrt{D}X)^2 }\right).
\end{multline*}
\end{lemma}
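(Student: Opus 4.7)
The plan is to reduce this to a prime-power sum via the identity established in the proof of Lemma \ref{L:5}: namely, the left-hand side of the present lemma equals $g(0) + S_{\mathrm{even};1}$, which (writing $\mathcal{L}:=\log(\sqrt{D}X)$ for brevity) is
\[
g(0) - \frac{2}{\mathcal{L}}\sum_{n=1}^\infty \frac{\Lambda_D(n)}{n}\hg\!\left(\frac{2\log n}{\mathcal{L}}\right).
\]
Using $\Lambda_D(p^k) = 2\Lambda(p^k)$ for $p\nmid D$ and $\Lambda_D(q^k) = \Lambda(q^k)$ for $q\mid D$, I would split the prime-power sum as $2A - B$, with
\[
A = \sum_{n\geq 1}\frac{\Lambda(n)}{n}\hg\!\left(\frac{2\log n}{\mathcal{L}}\right),\qquad B = \sum_{q\mid D}\sum_{k\geq 1}\frac{\log q}{q^k}\hg\!\left(\frac{2k\log q}{\mathcal{L}}\right).
\]

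The sum $B$ I would evaluate directly: for each $q\mid D$, Taylor expanding $\hg(y)=\hg(0)+O(y)$ and summing the geometric series $\sum_k q^{-k}=1/(q-1)$ yields $\sum_{k\geq 1}\log q/q^k\cdot\hg(2k\log q/\mathcal{L})=\hg(0)\log q/(q-1)+O((\log q)^2/\mathcal{L})$. Summing over the (finitely many) $q\mid D$ gives $B=\hg(0)\sum_{q\mid D}\log q/(q-1)+O(1/\mathcal{L})$, so $\frac{2}{\mathcal{L}}B$ contributes the $\sum_{q|D}$-piece of the main term together with an $O(1/\mathcal{L}^2)$ error.

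For the Chebyshev sum $A$, I would apply Abel summation with the classical identity
\[
\Psi(T) := \sum_{n\leq T}\frac{\Lambda(n)}{n} = \log T - C + O\!\left(\frac{1}{\log T}\right),
\]
which follows from the Laurent expansion $-\zeta'(s)/\zeta(s) = 1/(s-1) - C + O(s-1)$ together with the de la Vall\'ee Poussin form $\psi(T)=T+O(T/\log^2 T)$ of the Prime Number Theorem. After the substitution $x = 2\log t/\mathcal{L}$, the sum becomes
\[
A = -\int_{2\log 2/\mathcal{L}}^\sigma \bigl(\mathcal{L}x/2 - C + E(e^{\mathcal{L}x/2})\bigr)\,\hg'(x)\,dx,
\]
and integration by parts using $\hg(\sigma)=0$ and $\int_0^\sigma\hg(x)\,dx=g(0)/2$ (the latter because $\hg$ is even) evaluates the main part to $\mathcal{L}g(0)/4 - C\hg(0) + O(1/\mathcal{L})$. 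Assembling:
\[
g(0) - \frac{2}{\mathcal{L}}(2A-B) = \frac{4C\hg(0)}{\mathcal{L}} + \frac{2\hg(0)}{\mathcal{L}}\sum_{q\mid D}\frac{\log q}{q-1} + O\!\left(\frac{1}{\mathcal{L}^2}\right),
\]
which is the claimed identity.

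The main obstacle is controlling the error from $E(T)=O(1/\log T)$ through the Abel summation: this error contributes $\int_{2\log 2/\mathcal{L}}^\sigma O(1/(\mathcal{L}x))\,|\hg'(x)|\,dx$, which at first glance has a divergence at $x=0$. The key saving is that $\hg$ is even (since $g$ is), so $\hg'(0)=0$; this makes $|\hg'(x)|/x$ bounded at the origin and the integral convergent, yielding the desired $O(1/\mathcal{L})$ estimate for the error in $A$.
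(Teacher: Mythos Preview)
Your proof is correct but proceeds by a genuinely different route than the paper's. The paper works directly with the integral: after the change of variables $t=\tau/\log(\sqrt{D}X)$, it splits the range into $|t|\le 1$ and $|t|>1$. On $|t|>1$ it uses the decay $g(\log(\sqrt{D}X)\,t)\ll (\log(\sqrt{D}X))^{-2}t^{-2}$ together with the Titchmarsh bound $\zeta'/\zeta(1+it)\ll \log t$ (the $q\mid D$ pieces being periodic and bounded) to get $O(1/\log^2)$; on $|t|\le 1$ it Taylor--expands $\tre[\,2\zeta'/\zeta(1+4\pi i t)\,]=2C+O(t^2)$ and $\tre[\log q/(q^{1+it}-1)]=\log q/(q-1)+O(t^2)$ and integrates term by term. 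You instead revert the integral to the prime sum via Lemma~\ref{L:5} and evaluate that sum by Abel summation against the Mertens-type asymptotic $\sum_{n\le T}\Lambda(n)/n=\log T - C + O(1/\log T)$. Your route trades the elementary growth bound on $\zeta'/\zeta$ for a quantitative Prime Number Theorem; it has the virtue of making the appearance of the constant $2C$ completely transparent (it is literally the Mertens constant), and your observation that the evenness of $\hg$ forces $\hg'(0)=0$ to tame the apparent $1/x$ singularity is exactly the right mechanism. Both approaches carry the same implicit dependence on $D$ in the $q\mid D$ error terms, so neither is sharper in that respect.
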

\begin{proof}
We change variables
\[
t=\frac{\tau}{\log(\sqrt{D}X)}\qquad dt = \frac{d\tau}{\log(\sqrt{D}X)}\qquad g(\tau) = g(\log(\sqrt{D}X) t).
\]
We have lost the \lq obvious\rq\  $1/\log(\sqrt{D}X)$ term, but since $g$ is Schwartz,
\[
g(\log(\sqrt{D}X) t) \ll \frac{1}{\log(\sqrt{D}X)^2t^2 }.
\]
From \cite[(3.11.9)]{Tit} we have
\[
\frac{\zeta^\prime}{\zeta}\left(1+i t\right)\ll \log(t),
\]
while the terms arising from the $q|D$ are periodic and hence $O(1)$.
So to treat the integral on $(-\infty,-1]\cup[1,\infty)$, we bound the integral by
\[
\ll \frac{1}{\log(\sqrt{D}X)^2}\int_1^\infty \frac{\log (t)}{t^2}\, dt= \frac{1}{\log(\sqrt{D}X)^2}.
\]
Meanwhile, 
\begin{gather*}
\tre\left[2\frac{\zeta^\prime}{\zeta}(1+4\pi i t)\right] = 2C+O(t^2),\\
\tre\left[\frac{\log(q)}{q^{1+i t}-1}\right] = \frac{\log(q)}{q-1}+O(t^2)
\end{gather*}
(where $C$ is the Euler constant).   We treat the constant terms and the quadratic error separately, computing that
\begin{align*}
2\int_{-1}^1 g(\log(\sqrt{D}X) t)\, dt =&\frac{2}{\log(\sqrt{D}X)}\int_{-\log(\sqrt{D}X)}^{\log(\sqrt{D}X)}g(y)\,dy\\
=&\frac{2\hg(0)}{\log(\sqrt{D}X)}-
\frac{4}{\log(\sqrt{D}X)}\int_{\log(\sqrt{D}X)}^{\infty}g(y)\,dy\\
=&\frac{2\hg(0)}{\log(\sqrt{D}X)}+O\left(\frac{1}{\log(\sqrt{D}X)^2}\right),
\end{align*}
as $g(y)\ll 1/y^2$.
And
\begin{align*}
\int_{-1}^1 g(\log(\sqrt{D}X) t)\cdot t^2\, dt=&\frac{1}{\log(\sqrt{D}X)^3}\int_{-\log(\sqrt{D}X)}^{\log(\sqrt{D}X)}g(y)\cdot y^2\,dy\\
 \ll& \frac{1}{ \log(\sqrt{D}X)^2 },
\end{align*}
since the integrand is $O(1)$.
\end{proof}

\begin{lemma}  Let
\[ 
A'(r) = \sum_p \frac{\log p}{(p+1)(p^{1+2r}-1)}.
\]
Then
\begin{multline}
S_{{\rm even};2} =  \frac{4}{\log(\sqrt{D}X)}\intii
g(\tau) \tre \left[A'\left(\frac{2\pi i \tau}{\log
(\sqrt{D}X)}\right)\right]d\tau \\+ O\left(\frac{\max\hg\cdot \log(\omega(D))}{X^{1/2}\log(\sqrt{D}X)}\right).
\end{multline}
\end{lemma}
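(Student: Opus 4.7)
The plan is to interchange the order of summation in $S_{{\rm even};2}$, placing the sum over primes outside and the sum over $f$ inside, then apply the character-sum estimate (\ref{Eq:charsum2}) to the inner count. Writing $L=\log(\sqrt{D}X)$ for brevity,
\[
S_{{\rm even};2}=\frac{4}{X^\ast \log L}\sum_{\ell=1}^\infty \sum_{p}\frac{\log p}{p^\ell}\,\hg\!\left(\frac{2\ell\log p}{L}\right)\sum_{\substack{f\in \mathcal F(X) \\ p\mid f}}1.
\]
Primes $p\mid D$ contribute nothing, since $(f,D)=1$ forces $p\nmid f$. For $p\nmid D$, applying (\ref{Eq:charsum2}) at $X$ and at $2X$, subtracting, and building in the coprimality condition $(f,D)=1$ by the same local-density argument that produced $X^\ast$, yields
\[
\sum_{\substack{f\in \mathcal F(X) \\ p\mid f}}1=\frac{X^\ast}{p+1}+O\!\left((X/p)^{1/2}\right).
\]

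Substituting the error back into the triple sum, bounding $|\hg|\le \max\hg$, and using absolute convergence of $\sum_p \log(p)/p^{\ell+1/2}$ for $\ell\ge 1$, the $O$-contribution is
\[
\ll \frac{\max\hg\cdot X^{1/2}}{X^\ast \log L}\ll \frac{\max\hg\cdot \log(\omega(D))}{X^{1/2}\log L}
\]
by (\ref{Eq:mertens}), matching the stated error. What remains is the main term
\[
M=\frac{4}{\log L}\sum_{\ell=1}^\infty\sum_{p\nmid D}\frac{\log p}{p^\ell(p+1)}\,\hg\!\left(\frac{2\ell\log p}{L}\right).
\]

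To recognise $M$ as an integral against $g$, I use Fourier inversion. With $r=2\pi i\tau/L$, the identity $\hg(u)=\intii g(\tau)e^{-2\pi i\tau u}d\tau$ evaluated at $u=2\ell\log p/L$ gives
\[
\hg\!\left(\frac{2\ell\log p}{L}\right)=\intii g(\tau)\,p^{-2\ell r}\,d\tau.
\]
Since $\tre(r)=0$, the double series $\sum_\ell\sum_p \log(p)/(p^\ell(p+1))$ converges absolutely, so Fubini permits the interchange. The geometric identity $\sum_{\ell\ge 1}p^{-\ell(1+2r)}=1/(p^{1+2r}-1)$ collapses the inner double sum to $A'(r)$ (the $q\mid D$ terms implicit in extending the sum to all primes contribute only $O(1/\log L)$ under the integral, which is absorbed either into the definitional convention of $A'$ or, once (\ref{Eq:5}) is assembled, into the $\zeta_D'/\zeta_D$ piece via the per-prime identity $\log p/(p^{2+2r}-1)+\text{Rem}|_p=\log p/[(p+1)(p^{1+2r}-1)]$). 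Finally, evenness of $g$ lets me fold the integral under $\tau\mapsto -\tau$ and keep only $\tre[A'(r)]$, yielding the displayed formula.

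The only real obstacle is the character-sum bookkeeping in the first step: verifying the adaptation of (\ref{Eq:charsum2}) to $\mathcal F(X)$ with the built-in coprimality condition $(f,D)=1$, keeping the error at $O((X/p)^{1/2})$ with a $D$-dependence no worse than $\log\omega(D)$ once (\ref{Eq:mertens}) is used. Unlike Lemma \ref{L:5}, no contour shift is required, because the Dirichlet series defining $A'(r)$ is already regular at $r=0$; the Fourier inversion is direct.
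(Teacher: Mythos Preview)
Your approach is essentially the same as the paper's: interchange the $f$-sum inward, apply the count (\ref{Eq:charsum2}), then Fourier-invert $\hg$ and sum the geometric series in $\ell$. Two small points: you write $\log L$ where you mean $L$ (you defined $L=\log(\sqrt{D}X)$), and the paper handles the passage to a sum over \emph{all} primes slightly differently---it first truncates at $p\le 2X$ (forced by $p\mid f$), then after isolating the main term extends back to all $p$ at cost $O(X^{-0.999})$; you instead note the estimate $X^\ast/(p+1)+O((X/p)^{1/2})$ is harmless for large $p$, which is fine, and you flag the $q\mid D$ discrepancy between your main term and the stated $A'(r)$ (a point the paper glosses over).
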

\begin{proof}
The proof is again that of \cite{Miller}, simplified slightly as the error bounds in (\ref{Eq:charsum2}) track the dependence on $p$.
Observe that in (\ref{Eq:seven2}) we have $p|f\le 2X$, so certainly $p\le 2X$.
Thus (\ref{Eq:seven2}) is
\begin{align*}
& =  \frac4{X^\ast}
\sum_{f\in\mathcal F(X)} \sum_{\ell=1}^\infty \sum_{\substack{p \le 2X \\ p|f}}
\frac{\log p}{p^\ell \log(\sqrt{D}X)}\ \hg\left(2\frac{\log p^\ell}{\log
(\sqrt{D}X)}\right) \\ 
&=\frac4{X^\ast} \sum_{\ell=1}^\infty \sum_{p \le 2X} \frac{\log
p}{p^\ell \log (\sqrt{D}X)}\ \hg\left(2\frac{\log p^\ell}{\log (\sqrt{D}X)}\right)
\sum_{f\in\mathcal F(X),\ p|f} 1 \\
&= 4 \sum_{\ell=1}^\infty \sum_{p \le
2X} \frac{\log p}{p^\ell \log (\sqrt{D}X)}\cdot \frac{1}{p+1}\
\hg\left(2\frac{\log p^\ell}{\log (\sqrt{D}X)}\right) \\
&\qquad\qquad\qquad+
O\left(\frac{\max\hg\cdot X^{1/2}}{X^*\log(\sqrt{D}X)} \sum_{\ell=1}^\infty \sum_{ p\le 2X}
\frac{\log(p)}{\sqrt{p}p^\ell}\right). 
\end{align*}
In the $O$ term, sum the series on $\ell$ first, each is again $\ll \log(p)/p^{3/2}$, the sum of these converges.  This gives
\begin{multline*}
= 4
\sum_{\ell=1}^\infty \sum_{p \le 2X} \frac{\log p}{p^\ell \log
(\sqrt{D}X)}\cdot \frac{1}{p+1}\ \hg\left(2\frac{\log p^\ell}{\log (\sqrt{D}X)}\right) \\
 +O\left(\frac{\max\hg\cdot\log(\omega(D))}{X^{1/2}\log(\sqrt{D}X)}\right).
\end{multline*}
We see the error the Lemma allows and re-write the rest, the terms involving $\hg(2\log p^\ell/\log (\sqrt{D}X))$ by expanding the Fourier
transform.  It is equal to
\begin{multline*}    \frac{4}{\log (\sqrt{D}X)}
\sum_{\ell=1}^\infty \sum_{p \le 2X} \frac{\log p}{(p+1)p^\ell
} \times\\
\intii g(\tau) \exp(-2\pi i\tau \cdot 2\log p^\ell / \log (\sqrt{D}X))
d\tau \\
=  \frac{4}{\log (\sqrt{D}X)} \sum_{p \le 2X} \frac{\log p}{(p+1)}\intii g(\tau)
\sum_{\ell=1}^\infty p^{-\ell} \cdot p^{-4\pi i\tau  \ell/\log
(\sqrt{D}X)} d\tau 
\end{multline*}
Sum the geometric series to get
\[
=  \frac{4}{\log (\sqrt{D}X)} \sum_{p\le 2X}
\frac{\log p}{(p+1)}\intii g(\tau) \left( p^{1+4\pi i\tau/\log
(\sqrt{D}X)}-1\right)^{-1}d\tau . 
\]
We claim we can extend the $p$-sum by putting in the primes $p>2X$ at a cost of another error no worse than
$O(X^{-0.999})$.  This is because the summands are $O(\log p /
p^2)$ and $g$ is bounded. So we are claiming
\[
\sum_{p>2X}\frac{\log p}{p^2}\times \text{bounded}\ll\sum_{n>2X} \frac{\log n}{n^2}\ll X^{-0.999},
\]
by the integral test.  Since this depends on $\max g$, we should really keep track of this as well, but the $X^{-0.999}$ is so small we'll just ignore it.
The resulting $p$-sum is 
$A'(2\pi i \tau/\log (\sqrt{D}X))$.  As before, the imaginary part is orthogonal to $g$.
\end{proof}

\begin{remark}
The function $A^\prime(r)$ arises from a derivation of 1-level density via the $L$-functions Ratio Conjecture, see \cite{Miller} (where the notation is $A_D^\prime(r,r)$).
\end{remark}

\begin{lemma}\label{L:Aest} We have
\begin{multline*}
\frac{4}{\log(\sqrt{D}X)}\intii
g(\tau) \tre\left[ A'\left(\frac{2\pi i \tau}{\log
(\sqrt{D}X)}\right)\right]d\tau=\\
4\frac{\zeta^\prime(2)}{\zeta(2)}\frac{\hg(0)}{\log(\sqrt{D}X)}+O\left(\frac{1}{\log(\sqrt{D}X)^2}\right)
\end{multline*}
\end{lemma}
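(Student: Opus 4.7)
The plan is to follow the proof of Lemma \ref{L:zetaest} essentially verbatim, with $A'(2\pi i t)$ now playing the role that $\zeta'/\zeta(1+4\pi i t)+L'/L(1+4\pi i t,1_D)$ played there. In fact the estimate here is \emph{easier}: the logarithmic growth of $\zeta'/\zeta(1+it)$ that had to be absorbed in Lemma \ref{L:zetaest} is replaced by the uniform boundedness of $A'(2\pi i t)$ on $\mathbb R$, thanks to the extra factor $(p+1)^{-1}$ in the summand.

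First I would change variables $t=\tau/\log(\sqrt{D}X)$, turning the left side into
\[
4\intii g\bigl(\log(\sqrt{D}X)\,t\bigr)\,\tre\bigl[A'(2\pi i t)\bigr]\,dt,
\]
and split the integration at $|t|=1$.  For the tail $|t|\ge 1$, the Schwartz bound $g(y)\ll 1/(1+y^2)$ gives $g(\log(\sqrt{D}X)\,t)\ll 1/(\log(\sqrt{D}X)\,t)^2$, while the elementary inequality $|p^{1+4\pi i t}-1|\ge p-1$ (valid for all real $t$) shows $|A'(2\pi i t)|\le\sum_p\log(p)/(p^2-1)$ is bounded independently of $t$.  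The resulting tail is $O(1/\log(\sqrt{D}X)^2)$.

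On the center $|t|\le 1$ I would Taylor expand $\tre[A'(2\pi i t)]=A'(0)+O(t^2)$; as in Lemma \ref{L:zetaest}, the linear coefficient is purely imaginary and drops out.  The constant piece contributes $4A'(0)\int_{-1}^{1} g(\log(\sqrt{D}X)\,t)\,dt$, which under $y=\log(\sqrt{D}X)\,t$ becomes $4A'(0)\hg(0)/\log(\sqrt{D}X)$ up to an $O(1/\log(\sqrt{D}X)^2)$ loss from the truncation, and the $O(t^2)$ remainder transforms into $O(1/\log(\sqrt{D}X)^3)\int g(y)y^2\,dy$, absorbed into the stated error.  The last step is to identify $A'(0)$ with $\zeta'(2)/\zeta(2)$ by specializing the Euler-product identity $\zeta'(s)/\zeta(s)=-\sum_p\log(p)/(p^s-1)$ at $s=2$.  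There is no genuine obstacle here; the only step meriting attention is the uniform lower bound $|p^{1+4\pi i t}-1|\ge p-1$, which is immediate from $|pe^{i\theta}-1|^2=p^2-2p\cos\theta+1\ge(p-1)^2$.
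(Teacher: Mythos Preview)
Your proposal is correct and takes essentially the same approach as the paper: follow the template of Lemma~\ref{L:zetaest}, replace the logarithmic growth bound by the uniform bound $|A'(2\pi i t)|\le A'(0)$ via $|pz-1|\ge p-1$ for $|z|=1$, and evaluate $A'(0)=\sum_p\log(p)/(p^2-1)$. One trivial slip: the Euler-product identity you cite gives $A'(0)=-\zeta'(2)/\zeta(2)$, not $\zeta'(2)/\zeta(2)$, which is exactly how the paper records it.
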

\begin{proof}
The proof is similar to that of Lemma \ref{L:zetaest}, but is in fact easier as the series $A^\prime(r)$ is absolutely convergent when $r$ is purely imaginary, and bounded by $A^\prime(0)$, since for $|z|=1$
\[
|pz-1|\ge p-1,
\]
and
\[
A^\prime(0)=\sum_p \frac{\log(p)}{p^2-1}=-\frac{\zeta^\prime(2)}{\zeta(2)}\approx 0.569961\ldots .
\]
\end{proof}

We break $A^\prime(r)$ into the sum of two terms:
\[
A^\prime(r)=-\frac{\zeta^\prime}{\zeta}(2+2r)+\left(A^\prime(r)+\frac{\zeta^\prime}{\zeta}(2+2r)\right),
\]
and let $\text{Rem}(r)$ denote the term in parenthesis above.  
\begin{remark}\label{R:rem}
The sum over primes in $\text{Rem}(it)$ converges no slower than $\sum_p\log(p)/p^3$.  Furthermore the term in the sum corresponding to a prime $p$ is periodic and equal to $0$ whenever $t\log(p)/2\pi$ is an integer, so we expect there is a lot of cancellation in the sum.  Figure \ref{Fig:3} shows a graph comparing $\tre[\zeta^\prime/\zeta(1+2it)]$, $\tre[-\zeta^\prime/\zeta(2+2it)]$ and $\tre[\text{Rem}(it)]$ for $0\le t\le 20$.
\end{remark}

\begin{figure}
\begin{center}
\includegraphics[scale=1, viewport=0 0 400 225,clip]{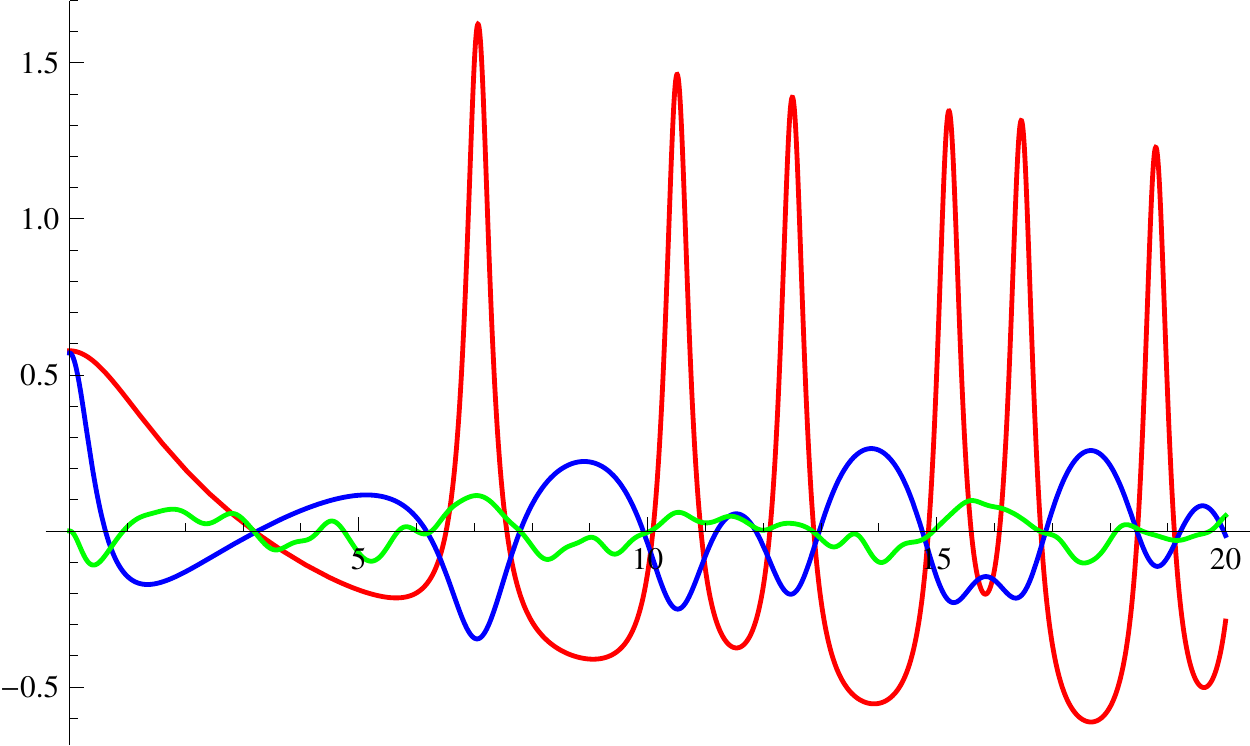}
\caption{$\tre[\zeta^\prime/\zeta(1+2it)]$ (red), $\tre[-\zeta^\prime/\zeta(2+2it)]$ (blue), and $\tre[\text{Rem}(it)]$ (green) for $0\le t\le 20$}\label{Fig:3}
\end{center}
\end{figure}

\section{The sum over odd powers of primes}\label{S:odd} 

Throughout we let $1$ the trivial character modulo $4$ and
$\chi(n)$ be the non-trivial character modulo $4$:
\[
1(n) =
\begin{cases}
1&\text{if } n\equiv 1,3 \bmod 4,\\
0&\text{if } n\equiv 0,2 \bmod 4
\end{cases}
\quad
\chi(n) =
\begin{cases}
1&\text{if } n\equiv 1 \bmod 4,\\
-1&\text{if } n\equiv 3 \bmod 4,\\
0&\text{if } n\equiv 0,2 \bmod 4
\end{cases}
\]

\begin{lemma}\label{L:charsum2odd} For an odd prime $p$,
\begin{equation}\label{Eq:PVboundodd}
\sum_{\text{odd }f\in\mathcal F(X)}\chi_f(p) \ll_\epsilon 
\tau(D)\left(X^{1-\epsilon^2}+X^{1/2}p^{1/8+\epsilon/2}  \right),
\end{equation}
where $\tau(D)$ denotes the number of divisors of $D$, and the implied constant depends only on $\epsilon$.
\end{lemma}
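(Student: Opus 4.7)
The plan is to convert the character sum $\sum_f \chi_f(p)$ into a short sum of a Dirichlet character modulo $4p$ evaluated at $f$, peel off the squarefree/congruence/coprimality constraints on $f$ by Möbius inversion, and estimate the resulting inner sums via the Burgess inequality. First, using quadratic reciprocity together with the fact that an odd fundamental discriminant satisfies $f \equiv 1 \pmod 4$, one has $\chi_f(p) = \pm\,\chi_p^{\ast}(|f|)$ for $(p,f)=1$, where $\chi_p^{\ast}$ is a fixed non-principal Dirichlet character modulo $4p$ and the sign depends only on the sign of $f$. Splitting the outer sum according to $\mathrm{sgn}(f)$ removes this sign, reducing matters to a sum of $\chi_p^{\ast}(|f|)$ over odd squarefree $|f| \in [X,2X)$ with $|f|\equiv 1$ or $3 \pmod 4$ (depending on the branch) and $(|f|,D)=1$.

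Second, I would strip the remaining arithmetic conditions on $f$ by Möbius: $\mu^2(|f|) = \sum_{d^2\mid |f|}\mu(d)$ for the squarefree condition, $\mathbf{1}_{(|f|,D)=1} = \sum_{e\mid (|f|,D)}\mu(e)$ for the coprimality with $D$, and expansion in characters mod $4$ for the congruence. Writing $|f|=d^2 e m$, the original sum becomes
\[
\sum_{\pm}\sum_{e\mid D}\mu(e)\sum_{d}\mu(d)\,\chi_p^{\ast}(d^2 e)\sum_{m\in I(d,e)}\chi(m),
\]
where $I(d,e)$ is an interval of length $\asymp X/(d^2 e)$ and $\chi$ is a fixed non-principal Dirichlet character modulo $4p$ (the product of $\chi_p^{\ast}$ and a character mod $4$).

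Third, the inner sums are exactly of the form to which Burgess' bound \cite[(12.57)]{IK} applies; since $4p$ is cubefree, for any integer $r\ge 2$,
\[
\Bigl|\sum_{m\in I(d,e)}\chi(m)\Bigr| \ll_\epsilon \Bigl(\tfrac{X}{d^2 e}\Bigr)^{1-1/r} p^{(r+1)/(4r^2)+\epsilon}.
\]
I would use Burgess whenever it beats the trivial bound $X/(d^2 e)$ and the trivial bound otherwise. Summing Burgess over the regime $d^2 e \le X p^{-(r+1)/(4r)}$, using $\sum_d d^{-2(1-1/r)} \ll 1$ for $r>2$, and the trivial bound beyond that, together with $\sum_{e\mid D}e^{-1/2}\le\tau(D)$, yields the overall estimate
\[
\ll_\epsilon \tau(D)\,X^{1-1/r}p^{(r+1)/(4r^2)+\epsilon} + \tau(D)\,X^{1/2}p^{(r+1)/(8r)}.
\]
Choosing $r$ of order $1/\epsilon^{2}$ makes the first summand $\tau(D)\,X^{1-\epsilon^{2}}p^{O(\epsilon^{2})}$ and the second $\tau(D)\,X^{1/2}p^{1/8+O(\epsilon^{2})}$; absorbing the harmless $p^{O(\epsilon^{2})}$ factors (and relabeling $\epsilon$) produces the claimed bound.

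The main technical obstacle I anticipate is bookkeeping rather than analysis: one must track the quadratic reciprocity sign factors simultaneously with the Möbius expansions and the attendant coprimality conditions $(d,eD)=1$ and $(m,2pD)=1$, and verify that $\sum_{e\mid D}e^{-1/2}$ is really $\le \tau(D)$ uniformly in $D$ (rather than something worse such as $2^{\omega(D)}\log D$) so that the stated $\tau(D)$ dependence is tight. Verifying that the two Möbius truncations used for the split-range argument can be handled in the same pass, without introducing a $\log X$ factor that would spoil the power-savings exponent $\epsilon^{2}$ in $X^{1-\epsilon^{2}}$, is where I expect the cleanest care to be required.
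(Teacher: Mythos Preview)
Your reduction is exactly the paper's: flip $\chi_f(p)$ to a character modulo $4p$ in the variable $f$, strip the squarefree and $(f,D)=1$ conditions by the two M\"obius sums, and split the range of the inner parameter according to whether Burgess or the trivial bound is stronger. Structurally there is nothing to add.

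The one genuine slip is in the final optimization. Taking $r$ of order $1/\epsilon^{2}$ does \emph{not} work: your first term is then $X^{1-\epsilon^{2}}p^{c\epsilon^{2}}$, and for $p$ near $X^{4}$ (say $p=X^{a}$ with $3<a<4$) one checks directly that $X^{1-\epsilon^{2}}p^{c\epsilon^{2}}$ exceeds both $X^{1-\epsilon^{2}}$ and $X^{1/2}p^{1/8+\epsilon/2}$. The factor $p^{O(\epsilon^{2})}$ is not ``harmless'' here because the $X$-saving is of the \emph{same} order $\epsilon^{2}$, so there is nothing left to absorb it. The correct choice is $r\asymp 1/\epsilon$: then the first term is $X^{1-c\epsilon}p^{c'\epsilon}$, and on the Burgess range one has $p\ll N^{4}$, whence $p^{c'\epsilon}\le N^{4c'\epsilon}$; the net exponent becomes $1-(c-4c')\epsilon+O(\epsilon^{2})$, and it is the near-cancellation $c\approx 4c'$ that produces the advertised saving of size $\epsilon^{2}$. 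The paper avoids this bookkeeping entirely by quoting Burgess in the already-optimized form of \cite[(12.57)]{IK}, namely $\sum_{k\le N}\chi(k)\ll_\epsilon N^{1-\epsilon^{2}}$ whenever $N>p^{1/4+\epsilon}$, and breaking the $l$-sum precisely at $X/(tl^{2})=p^{1/4+\epsilon}$; the two pieces then give $\tau(D)X^{1-\epsilon^{2}}$ and $\tau(D)X^{1/2}p^{1/8+\epsilon/2}$ with no residual $p$-factor to absorb.
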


\begin{proof}  First note that by the triangle inequality, it suffices to prove the bound for $0<|f|\le X$ rather than $f\in\mathcal F(X)$.   
Separating the odd positive and odd negative fundamental discriminants, we have a sum of two terms (choosing $+$ or $-$ consistently below)
\[
\sum_{\substack{ 0<f\le X\\ \pm f \text{ odd fund.}\\(f,D)=1}}\chi_{\pm f}(p) 
=\frac{1}{2}\sum_{\substack{0<f\le X\\(f,D)=1\\f \text{ squarefree}}} \left(\frac{\pm f}{p}\right)\left(1(f)\pm \chi(f)\right).
\]
The possible choices of $+$, $-$, $1$ and $\chi$ leads to four terms all with the same analysis; we will consider the one involving $-$ and $\chi$.  Ignore the constant $(-1/p)/2$, and write the characteristic function of square free integers $f$ prime to $D$ as
\[
\sum_{l^2|f}\mu(l)\sum_{t|(f,D)}\mu(t).
\]
Observe that the contribution of $f$ is $0$ unless $f$ is odd, thus (since $t|D$) we may assume $t$ is square free, and writing $f=ut$ we may assume $l^2|u$, or $u=l^2k$.  Thus we have
\begin{multline*}
\sum_{\substack{0<f\le X\\(f,D)=1}}\mu^2(f) \left(\frac{f}{p}\right)\chi(f)=
\sum_{t|D}\mu(t)\left(\frac{t}{p}\right)\chi(t)\\
\times \sum_{0<l\le (X/t)^{1/2}}\mu(l)\left(\frac{l^2}{p}\right)\chi(l^2)\sum_{0<k\le X/tl^2}\left(\frac{k}{p}\right)\chi(k).
\end{multline*}
To summarize our progress, we can bound our sum with
\[
\sum_{t|D}\sum_{l<(X/t)^{1/2}}\text{ a character sum mod } 4p \text{ of length }\frac{X}{tl^2}.
\]
Since we need to consider $p$ as large as $\left(\sqrt{D}X\right)^\sigma$, and hope for $\sigma>1$, this is a short character sum.  The Burgess bound \cite[(12.57)]{IK} gives
\[
\sum_{0<k\le X/tl^2}\left(\frac{k}{p}\right)\chi(k) \ll_\epsilon \left(\frac{X}{tl^2}\right)^{1-\epsilon^2}
\]
as long as $X/tl^2>p^{1/4+\epsilon}$.  We proceed by breaking the sum on $l$ at the point where we must fall back on the trivial bound instead, giving
\[
\sum_{t|D}\:\sum_{l<(X/tp^{1/4+\epsilon})^{1/2}}\left(\frac{X}{tl^2}\right)^{1-\epsilon^2}+
\sum_{t|D}\:\sum_{(X/tp^{1/4+\epsilon})^{1/2}<l}\frac{X}{tl^2}.
\]
The first double sum above looks like
\[
X^{1-\epsilon^2}\sum_{t|D}t^{\epsilon^2-1}\sum_{l<(X/tp^{1/4+\epsilon})^{1/2}}l^{2\epsilon^2-2}\ll_\epsilon \tau(D) X^{1-\epsilon^2}
\]
since the sum on $l$ is $O(1)$.  Meanwhile, the second double sum is
\[
\ll 
\sum_{t|D}\frac{X}{t}\cdot  \left(\frac{tp^{1/4+\epsilon}}{X}\right)^{1/2}\ll\tau(D) X^{1/2}p^{1/8+\epsilon/2},
\]
where the first $\ll$ comes from comparing a sum to an integral.

\end{proof}

\begin{lemma}\label{L:charsum2} Under our hypothesis that the support of $\hg \subset (-\sigma,\sigma)$, we have $p<\left(\sqrt{D}X\right)^\sigma$.  Then as long as
\[
D^{\sigma/2}<X^{4-\sigma-16\epsilon}
\]
we have
\begin{equation}\label{Eq:PVbound}
\sum_{f\in\mathcal F(X)}\chi_f(p) \ll_\epsilon  
\tau(D)X^{1-\epsilon^2},
\end{equation}
where $\tau(D)$ denotes the number of divisors of $D$, and the implied constant depends only on $\epsilon$.
\end{lemma}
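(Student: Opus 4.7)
The plan is to augment Lemma \ref{L:charsum2odd} with a parallel estimate for the even fundamental discriminants in $\mathcal{F}(X)$, and then to use the hypothesis $D^{\sigma/2} < X^{4-\sigma-16\epsilon}$ to absorb the second term $X^{1/2} p^{1/8+\epsilon/2}$ of (\ref{Eq:PVboundodd}) into $X^{1-\epsilon^2}$.

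First I would decompose $\mathcal{F}(X)$ into its odd and even parts. For odd $f$, Lemma \ref{L:charsum2odd} already supplies the required bound. For the even part, each such $f$ may be written $f = \pm 4m$ with $m$ squarefree and $m \equiv 2, 3 \pmod{4}$; I detect squarefreeness and coprimality to $D$ via M\"obius exactly as in the proof of Lemma \ref{L:charsum2odd}, and I detect the mod-$4$ congruence using the characters $1$ and $\chi$ of modulus $4$ introduced at the top of \S \ref{S:odd}. The inner sum then becomes a short character sum of modulus dividing $8p$ and length $X/tl^2$, of the same shape as in the odd case. Applying the Burgess bound \cite[(12.57)]{IK} and breaking the $l$-sum at the same threshold $l \asymp (X/tp^{1/4+\epsilon})^{1/2}$ yields an estimate of the same form $\ll_\epsilon \tau(D)\bigl(X^{1-\epsilon^2} + X^{1/2} p^{1/8+\epsilon/2}\bigr)$.

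Next I would use the hypotheses to show $p < X^{4-16\epsilon}$: the support condition on $\hg$ gives $p < (\sqrt{D}X)^\sigma = D^{\sigma/2} X^\sigma$, and combined with $D^{\sigma/2} < X^{4-\sigma-16\epsilon}$ this is immediate. Then
\[
X^{1/2} p^{1/8+\epsilon/2} < X^{1/2 + (4-16\epsilon)(1/8+\epsilon/2)} = X^{1 - 8\epsilon^2} \le X^{1 - \epsilon^2},
\]
so the second term of (\ref{Eq:PVboundodd}) is absorbed into the first, and summing the odd and even contributions gives (\ref{Eq:PVbound}).

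The main obstacle is the bookkeeping for the even fundamental discriminants: one must arrange the M\"obius inversion and congruence detection so that, after all characters are extracted, the remaining inner sum is a short character sum over a complete interval to which Burgess applies with the same input parameters as in the odd case. This is routine but tedious; the exponent algebra above shows that the constant $16\epsilon$ appearing in the hypothesis is precisely what is needed to produce a saving of $X^{-8\epsilon^2}$, which is then weakened to $X^{-\epsilon^2}$ in the statement.
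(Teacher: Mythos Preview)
Your overall strategy is sound and matches the paper's: reduce to Lemma~\ref{L:charsum2odd}, then use the hypothesis $D^{\sigma/2}<X^{4-\sigma-16\epsilon}$ to force $p<X^{4-16\epsilon}$ and absorb $X^{1/2}p^{1/8+\epsilon/2}$ into $X^{1-\epsilon^2}$. Your exponent arithmetic is correct (and in fact cleaner than the paper's, which instead truncates the series for $(1/2-\epsilon^2)/(1/8+\epsilon/2)$).

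Two points of comparison. First, for the even discriminants you propose to redo the M\"obius/Burgess argument from scratch. The paper takes a shorter route: every even fundamental discriminant factors as $-4f'$, $8f'$, or $-8f'$ with $f'$ an \emph{odd} fundamental discriminant of size at most $X/4$ or $X/8$, and $\chi_f(p)=\chi_{-4}(p)\chi_{f'}(p)$ (resp.\ $\chi_{\pm 8}(p)\chi_{f'}(p)$). Pulling out the constant factor reduces each piece to a sum over odd fundamental discriminants, so Lemma~\ref{L:charsum2odd} applies directly four times with no new Burgess input. Your approach works too, but duplicates effort.

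Second, there is a genuine omission: you have not treated $p=2$. Lemma~\ref{L:charsum2odd} is stated only for odd primes, and your even-$f$ argument likewise presupposes an odd modulus. The paper handles $p=2$ separately: $\chi_f(2)=0$ unless $f$ is odd, in which case $\chi_f(2)$ depends only on $f\bmod 8$. Running the same M\"obius sieve with characters modulo $8$ in place of modulo $4$, the inner sum is now a character sum modulo $8$ over consecutive integers, hence bounded by $1$ (no Burgess needed), and one gets the stronger bound $\tau(D)X^{1/2}$. You should add this case.
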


\begin{proof}
Consider first odd $p$ and odd $f$ as above.  Then
\[
X^{1/2}p^{1/8+\epsilon/2}<X^{1-\epsilon^2}
\]
as long as
\[
\left(\sqrt{D}X\right)^\sigma<X^{(1/2-\epsilon^2)/(1/8+\epsilon/2)},
\]
and
\[
\left(\sqrt{D}X\right)^\sigma<X^{4-16\epsilon}
\]
suffices,
by truncating the  expansion of $(1/2-\epsilon^2)/(1/8+\epsilon/2)$ as an alternating series.

Next, consider odd $p$ and arbitrary $f$.  The fundamental discriminants with $|f|<X$ are either odd fundamental discriminants, or of the form $-4f^\prime$ with odd $f^\prime$ and $|f^\prime|<X/4$, or of the form $\pm8f^\prime$ with $f^\prime$ odd and $|f^\prime|<X/8$.  Break up the sum into four sums accordingly, and factor a term $\chi_{-4}(p)$, $\chi_{8}(p)$, $\chi_{-8}(p)$, out of the last three.  Four applications of (\ref{Eq:PVboundodd}) and the above give the Lemma.   

Finally, consider the case $p=2$.  Now $\chi_f(2)=0$ unless $f$ is odd, in which case it depends only on $f$ modulo $8$.  We proceed much as in Lemma \ref{L:charsum2odd}, with the difference being that we use characters of the multiplicative group modulo $8$ to pick out the congruence classes $1\bmod 8$ and $ 5\bmod 8$.  Everything else is the same, until we reach the inner sum over $0<k\le X/tl^2$, where the summand now is one of the characters modulo $8$.  (The question of which specific character modulo $8$ depends on which of the subcases of sign of $f$ and residue class modulo $8$ we are considering.)  Summing this character over consecutive integers is bounded (by $1$ in fact), so we get in case $p=2$ the better bound
\[
\sum_{t|D}\sum_{l<X^{1/2}}1=\tau(D)X^{1/2}.
\]

\end{proof}

\section{Explicit Formula for the Dedekind zeta function}\label{S:Appendix}

For comparison purposes, it will be convenient to have at hand the Explicit Formula in the case of $\zeta(s)L(s,\chi_{-D})$.  For consistency we will use the  scale $\log(\sqrt{D}X)/2\pi$ for the zeros.  (Of course the $X$ is here meaningless; we include it only to be able to relate results to the previous sections.)
\begin{nonumtheorem} Let $g$ be an even Schwartz test
function such that $\hg$ has compact support.    We have 
{\allowdisplaybreaks
\begin{multline*}
2 \sum_{k=1}^\infty \sum_p \frac{(1+\chi_{-D}(p)^k)\log
(p)}{p^{k/2}\log(\sqrt{D}X)}\ \hg\left(\frac{\log (p^k)}{\log(\sqrt{D}X)}\right)\\
+\sum_{\gamma}
g\left(\gamma \frac{\log(\sqrt{D}X)}{2\pi}\right)=\frac{\log\left(D/\pi^2\right)}{\log(\sqrt{D}X)}\intii g(\tau)d\tau\\
2g\left(i/2\frac{\log(\sqrt{D}X)}{2\pi}\right)-2g\left(i(1/2-\delta)\frac{\log(\sqrt{D}X)}{2\pi}\right)
+\\
\frac{1}{\log(\sqrt{D}X)}\intii g(\tau)\tre\Bigg[
\frac{\Gamma'}{\Gamma}\left(\frac14+\frac{i\pi
\tau}{\log(\sqrt{D} X)}\right)+
\frac{\Gamma'}{\Gamma}\left(\frac34+\frac{i\pi
\tau}{\log(\sqrt{D} X)}\right)
\Bigg]
d\tau.
\end{multline*}
}
\end{nonumtheorem}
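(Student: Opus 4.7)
My plan is to derive this as the Weil-type explicit formula for the Dedekind zeta function $\zeta_K(s)=\zeta(s)L(s,\chi_{-D})$ of $K=\mathbb{Q}(\sqrt{-D})$, in direct parallel with the explicit formula for twisted genus characters stated at the beginning of \S \ref{S:ef1ld}. Indeed, the structure of the right-hand side mirrors the first formula almost line for line, with the conductor $Df^2$ of $L(s,\psi\otimes\chi_f)$ replaced by the conductor $D$ of $\zeta_K$, and the split $\chi_{fd_1}(p)^k+\chi_{fd_2}(p)^k$ replaced by $1+\chi_{-D}(p)^k$, which is precisely the local factor one reads off the Euler product $\zeta_K(s)=\prod_p (1-p^{-s})^{-1}(1-\chi_{-D}(p)p^{-s})^{-1}$.

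Concretely, I would apply the standard contour-integral argument to $-\zeta_K'/\zeta_K$. Form the completed function
\[
\xi_K(s)=D^{s/2}(2\pi)^{-s}\Gamma(s)\zeta_K(s),
\]
which satisfies $\xi_K(s)=\xi_K(1-s)$ and is meromorphic with simple poles only at $s=0$ and $s=1$ (from $\zeta_K$ at $s=1$ and $\Gamma(s)$ at $s=0$). Then integrate $-\zeta_K'/\zeta_K(s)\cdot F(s)$, with $F(s)=g((s-1/2)\log(\sqrt{D}X)/(2\pi i))$, along $\tre(s)=1+\epsilon$ and $\tre(s)=-\epsilon$, using the functional equation to identify the two pieces. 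The sum over nontrivial zeros assembles into $\sum_\gamma g(\gamma\log(\sqrt{D}X)/(2\pi))$ after isolating the off-critical Landau-Siegel zero $\beta=1-\delta$ and its symmetric partner $\delta$; the contour shift picks up the conductor term $\log(D/\pi^2)\hat g(0)/\log(\sqrt{D}X)$, the gamma-factor integral, and (via the logarithmic derivative of the Euler product) the prime sum on the left-hand side.

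The only features distinguishing this explicit formula from the one for twisted genus characters are the polar and off-critical contributions. The two poles of $\xi_K$ at $s=0,1$ each translate (via $\rho=1/2+i\gamma$, so $\gamma=\pm i/2$) to a term $g(i/2\cdot \log(\sqrt{D}X)/(2\pi))$ by the evenness of $g$, with a $+$ sign because poles of $\zeta_K$ contribute $-\zeta_K'/\zeta_K$ residues of opposite sign to zero residues; moving them to the right-hand side gives $+2g(i/2\cdot \log(\sqrt{D}X)/(2\pi))$. Symmetrically, the Landau-Siegel zero and its reflection at $\delta$, at $\gamma=\pm i(1/2-\delta)$, contribute $-2g(i(1/2-\delta)\log(\sqrt{D}X)/(2\pi))$ on the right. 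The gamma-factor contribution matches the twisted-genus-character case because of the Legendre duplication identity $(2\pi)^{-s}\Gamma(s)=2^{-1}\pi^{-s-1/2}\Gamma(s/2)\Gamma((s+1)/2)$, so that the archimedean part of $\xi_K$ is, up to a constant absorbed into the conductor term, $\Gamma_{\mathbb{R}}(s)\Gamma_{\mathbb{R}}(s+1)$, which upon logarithmic differentiation at $s=1/2+i\tau\cdot 2\pi/\log(\sqrt{D}X)$ produces the integrand $\Gamma'/\Gamma(1/4+i\pi\tau/\log(\sqrt{D}X))+\Gamma'/\Gamma(3/4+i\pi\tau/\log(\sqrt{D}X))$.

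The main obstacle, and really the only thing that needs genuine care, is bookkeeping of signs and multiplicities: that the pole contributions add with $+$ and the Landau-Siegel pair with $-$ (a consequence of the residue of $-\zeta_K'/\zeta_K$ being $+1$ at a pole of $\zeta_K$ and $-m$ at a zero of multiplicity $m$), and that the factor of $2$ in front of the prime sum arises both from the $k$-sum inherited from $\log\zeta_K$ and from the functional-equation-paired contour. Once these are tracked, every remaining calculation is routine and identical in form to those used in the derivation of the first explicit formula in \S \ref{S:ef1ld}.
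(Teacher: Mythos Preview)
Your proposal is correct and is precisely the standard derivation. The paper does not actually supply a proof of this theorem: it states the formula and remarks only that ``the contributions of the pole of $\zeta(s)$ at $s=0,1$ and the Landau-Siegel zero of $L(s,\chi_{-D})$ at $s=\delta,1-\delta$ appear on the right side above,'' relying on the same reduction to the explicit formula for quadratic Dirichlet $L$-functions that it invoked for the first theorem in \S\ref{S:ef1ld}. Your contour-integral sketch fills in exactly what the paper leaves implicit; the one simplification you might note is that you need not appeal to Legendre duplication, since the factorization $\zeta_K(s)=\zeta(s)L(s,\chi_{-D})$ already hands you the archimedean factor $\Gamma_{\mathbb R}(s)\Gamma_{\mathbb R}(s+1)$ directly (as $\chi_{-D}$ is odd), matching the gamma term in the twisted-genus-character formula without further work.
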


The contributions of the pole of $\zeta(s)$ at $s=0,1$ and the Landau-Siegel zero of $L(s,\chi_{-D})$ at $s=\delta,1-\delta$ appear on the right side above.
The results of \S \ref{S:2} carry over exactly for the Gamma factors.
The sum over primes is again separated into odd and even terms, and the even terms are exactly (as there is no $f$ contribution)  the previously seen
\[
S_{{\rm even};1}  =  \frac{-2}{\log(\sqrt{D}X)}\sum_{n=1}^\infty \frac{\Lambda_D(n)}{n} \ \hg\left(2\frac{\log
n}{\log (\sqrt{D}X)}\right).
\]
The corresponding results of \S \ref{S:even} carry over exactly.  We rearrange to isolate the sum over odd powers of primes, and the zeros other than the Landau-Siegel zero.  The point here (and the reason we included the $X$ scaling factor) is that the sum over odd powers of primes is exactly as before, missing only the sum over twists $f$.  If we assume Hypothesis H, and that $g$ is positive, we can then estimate the sum over the odd powers.  This gives 
{\allowdisplaybreaks
\begin{multline}\label{Eq:17}
2\sum_{l=0}^\infty \sum_p \frac{\lambda(p)\log
p}{p^{(2l+1)/2}\log (\sqrt{D}X)}
 \hg\left(\frac{\log p^{2l+1}}{\log (\sqrt{D}X)}\right)\\
 +\sum_{\gamma}
g\left(\gamma \frac{\log(\sqrt{D}X)}{2\pi}\right)=
 \frac{\log\left(D/\pi^2\right)}{\log(\sqrt{D}X)}\hg(0)-g(0) \\
 +\frac{1}{\log(\sqrt{D} X)}\intii g(\tau)\tre\Bigg[
\frac{\Gamma'}{\Gamma}\left(\frac14+\frac{i\pi
\tau}{\log(\sqrt{D} X)}\right)+
\frac{\Gamma'}{\Gamma}\left(\frac34+\frac{i\pi
\tau}{\log(\sqrt{D} X)}\right)
\Bigg]
d\tau\\
 + \frac{2}{\log(\sqrt{D}X)}\intii g(\tau)\tre\left[
\frac{\zeta^\prime}{\zeta}\left(1+\frac{4\pi i \tau}{\log(\sqrt{D}X)}\right) +
\frac{L^\prime}{L}\left(1+\frac{4\pi i \tau}{\log(\sqrt{D}X)},1_D\right)\right]d\tau\\
+2g\left(i/2\frac{\log(\sqrt{D}X)}{2\pi}\right)-2g\left(i(1/2-\delta)\frac{\log(\sqrt{D}X)}{2\pi}\right).
\end{multline}
}
Via the results of Lemma \ref{L:gammaest} and Lemma \ref{L:zetaest}, the two integrals on the right side above are
\[
2\left(C-\log(8)+\sum_{q|D}\frac{\log(q)}{q+1}\right)\frac{\hg(0)}{\log(\sqrt{D}X)}+O\left(\frac{1}{\log(\sqrt{D}X)^2}\right).
\]
The terms arising from the pole and the Landau-Siegel zero can be expressed as
\begin{multline*}
2g\left(i/2\frac{\log(\sqrt{D}X)}{2\pi}\right)-2g\left(i(1/2-\delta)\frac{\log(\sqrt{D}X)}{2\pi}\right)=\\
2\int_{-\sigma}^\sigma\hg(u)\left(\exp(-1/2\log(\sqrt{D}X)u)-\exp(-(1/2-\delta)\log(\sqrt{D}X)u)\right)\, du.
\end{multline*}
We bound $\hg(u)$ by $\max \hg$, and compute the integral of the exponentials.  The contribution of the endpoint $+\sigma$ tends to $0$ as $\sqrt{D}X\to\infty$ and we neglect it, to get the bound
\begin{align*}
\ll &\max \hg\cdot \frac{ (\sqrt{D} X)^{\sigma /2}}{\log(\sqrt{D} X)}\left(1
-\frac{ \exp(-\delta \log(\sqrt{D}X) \sigma )}{1-2 \delta  }\right)\\
=&\max \hg\cdot\frac{ (\sqrt{D}X)^{\sigma /2} }{  \log(\sqrt{D}X)}\cdot\left(\left(\sigma \log(\sqrt{D}X)  -2\right)\delta+O(\delta^2)\right)\\
\sim &\sigma \max \hg\cdot  (\sqrt{D}X)^{\sigma /2} \cdot \delta.
\end{align*}

To summarize, (\ref{Eq:17})  looks like
\begin{multline*}
\frac{\log\left(D/\pi^2\right)}{\log(\sqrt{D}X)}\cdot \hg(0)-g(0)+ O\left(\sigma \max \hg\cdot(\sqrt{D}X)^{\sigma /2} \cdot \delta\right)+\\
2\left(C-\log(8)+\sum_{q|D}\frac{\log(q)}{q+1}\right)\frac{\hg(0)}{\log(\sqrt{D}X)}+O\left(\frac{1}{\log(\sqrt{D}X)^2}\right).
\end{multline*}
Note that
\[
\frac{\log\left(D/\pi^2\right)}{\log(\sqrt{D}X)}=\frac{1+\log_D(\pi^2)}{1/2+\log_D(X)}=2+O(\log_D(X)).
\]

\section{Appendix: Notes towards Hypothesis H}\label{S:Appendix2}

In this appendix we adapt some ideas of Yoshida \cite{Yoshida}, to indicate how a Landau-Siegel zero of $L(s,\chi_{-D})$ (or equivalently, the lacunarity of the sequence $\lambda(p)=1+\chi_{-D}(p)$) would tend to push low-lying complex zeros of $L(s,\psi\otimes\chi_f)$ towards the critical line.  

For our test function pair $g$, $\hg$ we can also define 
\[
\mathcal M(\hg)(s)=\int_{-\infty}^\infty \hg(u) \exp((s-1/2)2\pi u) du,
\]
so that
\[
 g(t) =\mathcal M(\hg)(1/2+it)
\]
is the inverse Fourier transform.
To ease notation, it will be convenient to think of $\hg=h$ as the original function, $g=\hh$ as the transform.    For generic $h$, Yoshida denotes $\check{h}(x)=h(-x)$, and $\tilde{h}(x)=\overline{h(-x)}$.  Convolution as usual is defined by
\[
h_1*h_2(x)=\int_{-\infty}^{\infty} h_1(x-y)h_2(y)dy.
\]
For $\rho$ in $\mathbb C$, let $h_\rho(x)$ denote $h(x)\exp(-2\pi \rho x)$.
One easily verifies that
\begin{gather}
\mathcal M(\check{h})(s)=\mathcal M(h)(1-s),\notag \\
\mathcal M(\tilde{h})(s)=\overline{\mathcal M(h)}(1-\overline{s}),\notag \\
\mathcal M(h_1*h_2)(s)= \mathcal M(h_1)(s)\mathcal M(h_2)(s),\label{Eq:y1}\\
\mathcal M(h_\rho)(s)=\mathcal M(h)(s-\rho).\label{Eq:translate}
\end{gather}

Suppose now that $\rho_0$ is a complex zero of $L(s,\psi\otimes\chi_f)$ which is off the critical line.  Thus
\[
\rho_0,\quad 1-\rho_0,\quad \overline{\rho_0},\quad 1-\overline{\rho_0}
\]
are all distinct.
Choose test functions $h_0$ and $h_0^\prime$ so that
\begin{gather*}
\mathcal M(h_0)(\overline{\rho_0})=\mathcal M(h_0)(1-\rho_0)=\mathcal M(h_0)(1-\overline{\rho_0})=0,\\
\mathcal M(h_0^\prime)(\rho_0)=\mathcal M(h_0^\prime)(1-\rho_0)=\mathcal M(h_0^\prime)(1-\overline{\rho_0})=0,\\
\mathcal M(h_0)(1/2)=0=\mathcal M(h_0^\prime)(1/2).
\end{gather*}
(We can take any choice of test function with a zero at some point, and use (\ref{Eq:translate}) to shift that zero to an arbitrary point.  A quadruple convolution and (\ref{Eq:y1}) will then force four zeros.\ \ 
Normalize $h_0$ and $h_0^\prime$ so that
\[
\mathcal M(h_0)(\rho_0)=1=\mathcal M(h_0^\prime)(\overline{\rho_0}).
\]
(We would like to choose \lq bump functions\rq\  $\mathcal M(h_0)$ and $\mathcal M(h_0^\prime)$ with their mass concentrated at $\rho_0$ and $\overline{\rho_0}$, more on this later.)\ \   Define
\[
h=h_0-h_0^\prime+\check{h}_0-\check{h}^\prime_0,
\]
so $h$ is even.  Our test function will be $h*\tilde{h}$, which is also even.  Define
\[
\Phi(s)=\mathcal M(h*\tilde{h})(s)= M(h)(s)\cdot \overline{M(h)}(1-\overline{s})
\]
by (\ref{Eq:y1}).  Thus
\[
\Phi(\rho_0)=(1-0+0-0)\cdot(0-0+0-1)=-1,
\]
and similarly $\Phi=-1$ at $\overline{\rho_0}$, at $1-\rho_0$, and at $1-\overline{\rho_0}$.  (On the other hand, for any $\rho$ \emph{on} the critical line, $1-\overline{\rho}=\rho$ and thus $\Phi(\rho)=|\mathcal M(h)(\rho)|^2\ge0$.  But if $\Phi$ is a \lq concentrated\rq\  enough bump, these will not contribute much.)

Observe that
\[
h*\tilde{h}(0)=\int_{-\infty}^{\infty} h(-y)\tilde{h}(y)dy=\int_{-\infty}^{\infty} |h(y)|^2dy> 0,
\]
as $h$ is even.  
We see that
\[
h*\tilde{h}(x)=\int_{-\infty}^{\infty} h(y-x/2)\overline{h}(y+x/2)dy=o(1)\quad\text{for}\quad x\gg0,
\]
since $h$ has rapid decay, and thus the large primes contribute little.  On the other hand, by the lacunarity of $\lambda(p)$, the small primes should not contribute much either.  
The explicit formula looks like
\begin{multline*}
-4+\text{small error from zeros on critical line}=\\
(h*\tilde{h}(0)> 0)+\text{small error from primes},
\end{multline*}
contradiction.  
\begin{remark} Where do we use that $\rho_0$ is \lq low-lying\rq? The point is that the support of the original test function $\alpha_0$ is compact, but the more convolutions we form, the more the support is spread out.  The number of convolutions $M(\rho_0)$ depends on the density of zeros near $\rho_0$, which in turn depends on the height $\tim(\rho_0)$ of $\rho_0$.  But if the support of $\alpha$ becomes too large, the lacunary nature of $\lambda(p)$ disappears.
\end{remark}

\begin{remark}
This is not quite the right test function.  Since $\alpha$ is $1$ at $\rho_0$ and small elsewhere, its $L^2$ norm is small, and so is the $L^2$ norm of the transform by Plancherel.  Thus we end up with \lq small negative $=$ small positive, thus (small) contradiction.\rq\ \ It would be better to renormalize so that, perhaps, the $L^2$ norm is $1$.
\end{remark}

\end{document}